\documentclass{amsart}
\usepackage{amsthm}
\usepackage{amssymb}
\usepackage{amsmath}
\usepackage{amsmath,amscd}
\usepackage{color}
\usepackage{hyperref}
\usepackage[all,arc]{xy}

\setlength{\textwidth}{6 in}
\setlength{\textheight}{8.75 in}
\setlength{\topmargin}{-0.25in}
\setlength{\oddsidemargin}{0.25in}
\setlength{\evensidemargin}{0.25in}

\newtheorem{thm}{Theorem}[section]
\newtheorem{cor}[thm]{Corollary}
\newtheorem{prop}[thm]{Proposition}
\newtheorem{lem}[thm]{Lemma}

\theoremstyle{definition}
\newtheorem{defin}[thm]{Definition}

\newtheorem{examp}[thm]{Example}

\theoremstyle{remark}
\newtheorem{rem}[thm]{Remark}

\newcommand{\nc}{\newcommand}
\nc{\rnc}{\renewcommand}
\nc{\bb}[1]{{\mathbb #1}}
\nc{\bbA}{\bb{A}}\nc{\bbB}{\bb{B}}\nc{\bbC}{\bb{C}}\nc{\bbD}{\bb{D}}
\nc{\bbE}{\bb{E}}\nc{\bbF}{\bb{F}}\nc{\bbG}{\bb{G}}\nc{\bbH}{\bb{H}}
\nc{\bbI}{\bb{I}}\nc{\bbJ}{\bb{J}}\nc{\bbK}{\bb{K}}\nc{\bbL}{\bb{L}}
\nc{\bbM}{\bb{M}}\nc{\bbN}{\bb{N}}\nc{\bbO}{\bb{O}}\nc{\bbP}{\bb{P}}
\nc{\bbQ}{\bb{Q}}\nc{\bbR}{\bb{R}}\nc{\bbS}{\bb{S}}\nc{\bbT}{\bb{T}}
\nc{\bbU}{\bb{U}}\nc{\bbV}{\bb{V}}\nc{\bbW}{\bb{W}}\nc{\bbX}{\bb{X}}
\nc{\bbY}{\bb{Y}}\nc{\bbZ}{\bb{Z}}
\nc{\mbf}[1]{{\mathbf #1}}
\nc{\bfA}{\mbf{A}}\nc{\bfB}{\mbf{B}}\nc{\bfC}{\mbf{C}}\nc{\bfD}{\mbf{D}}
\nc{\bfE}{\mbf{E}}\nc{\bfF}{\mbf{F}}\nc{\bfG}{\mbf{G}}\nc{\bfH}{\mbf{H}}
\nc{\bfI}{\mbf{I}}\nc{\bfJ}{\mbf{J}}\nc{\bfK}{\mbf{K}}\nc{\bfL}{\mbf{L}}
\nc{\bfM}{\mbf{M}}\nc{\bfN}{\mbf{N}}\nc{\bfO}{\mbf{O}}\nc{\bfP}{\mbf{P}}
\nc{\bfQ}{\mbf{Q}}\nc{\bfR}{\mbf{R}}\nc{\bfS}{\mbf{S}}\nc{\bfT}{\mbf{T}}
\nc{\bfU}{\mbf{U}}\nc{\bfV}{\mbf{V}}\nc{\bfW}{\mbf{W}}\nc{\bfX}{\mbf{X}}
\nc{\bfY}{\mbf{Y}}\nc{\bfZ}{\mbf{Z}}
\nc{\bfa}{\mbf{a}}\nc{\bfb}{\mbf{b}}\nc{\bfc}{\mbf{c}}\nc{\bfd}{\mbf{d}}
\nc{\bfe}{\mbf{e}}\nc{\bff}{\mbf{f}}\nc{\bfg}{\mbf{g}}\nc{\bfh}{\mbf{h}}
\nc{\bfi}{\mbf{i}}\nc{\bfj}{\mbf{j}}\nc{\bfk}{\mbf{k}}\nc{\bfl}{\mbf{l}}
\nc{\bfm}{\mbf{m}}\nc{\bfn}{\mbf{n}}\nc{\bfo}{\mbf{o}}\nc{\bfp}{\mbf{p}}
\nc{\bfq}{\mbf{q}}\nc{\bfr}{\mbf{r}}\nc{\bfs}{\mbf{s}}\nc{\bft}{\mbf{t}}
\nc{\bfu}{\mbf{u}}\nc{\bfv}{\mbf{v}}\nc{\bfw}{\mbf{w}}\nc{\bfx}{\mbf{x}}
\nc{\bfy}{\mbf{y}}\nc{\bfz}{\mbf{z}}

\nc{\mcal}[1]{{\mathcal #1}}
\nc{\calA}{\mcal{A}}\nc{\calB}{\mcal{B}}\nc{\calC}{\mcal{C}}\nc{\calD}{\mcal{D}}
\nc{\calE}{\mcal{E}} \nc{\calF}{\mcal{F}}\nc{\calG}{\mcal{G}}\nc{\calH}{\mcal{H}}
\nc{\calI}{\mcal{I}}\nc{\calJ}{\mcal{J}}\nc{\calK}{\mcal{K}}\nc{\calL}{\mcal{L}}
\nc{\calM}{\mcal{M}}\nc{\calN}{\mcal{N}}\nc{\calO}{\mcal{O}}\nc{\calP}{\mcal{P}}
\nc{\calQ}{\mcal{Q}}\nc{\calR}{\mcal{R}}\nc{\calS}{\mcal{S}}\nc{\calT}{\mcal{T}}
\nc{\calU}{\mcal{U}}\nc{\calV}{\mcal{V}}\nc{\calW}{\mcal{W}}\nc{\calX}{\mcal{X}}
\nc{\calY}{\mcal{Y}}\nc{\calZ}{\mcal{Z}}
\nc{\fA}{\frak{A}}\nc{\fB}{\frak{B}}\nc{\fC}{\frak{C}} \nc{\fD}{\frak{D}}
\nc{\fE}{\frak{E}}\nc{\fF}{\frak{F}}\nc{\fG}{\frak{G}}\nc{\fH}{\frak{H}}
\nc{\fI}{\frak{I}}\nc{\fJ}{\frak{J}}\nc{\fK}{\frak{K}}\nc{\fL}{\frak{L}}
\nc{\fM}{\frak{M}}\nc{\fN}{\frak{N}}\nc{\fO}{\frak{O}}\nc{\fP}{\frak{P}}
\nc{\fQ}{\frak{Q}}\nc{\fR}{\frak{R}}\nc{\fS}{\frak{S}}\nc{\fT}{\frak{T}}
\nc{\fU}{\frak{U}}\nc{\fV}{\frak{V}}\nc{\fW}{\frak{W}}\nc{\fX}{\frak{X}}
\nc{\fY}{\frak{Y}}\nc{\fZ}{\frak{Z}}
\nc{\fa}{\frak{a}}\nc{\fb}{\frak{b}}\nc{\fc}{\frak{c}} \nc{\fd}{\frak{d}}
\nc{\fe}{\frak{e}}\nc{\fFf}{\frak{f}}\nc{\fg}{\frak{g}}\nc{\fh}{\frak{h}}
\nc{\fri}{\frak{i}}\nc{\fj}{\frak{j}}\nc{\fk}{\frak{k}}\nc{\fl}{\frak{l}}
\nc{\fm}{\frak{m}}\nc{\fn}{\frak{n}}\nc{\fo}{\frak{o}}\nc{\fp}{\frak{p}}
\nc{\fq}{\frak{q}}\nc{\fr}{\frak{r}}\nc{\fs}{\frak{s}}\nc{\ft}{\frak{t}}
\nc{\fu}{\frak{u}}\nc{\fv}{\frak{v}}\nc{\fw}{\frak{w}}\nc{\fx}{\frak{x}}
\nc{\fy}{\frak{y}}\nc{\fz}{\frak{z}}

\newcommand{\one}{1\hskip-3.5pt1}
\newcommand{\csm}{{c_{\text{SM}}}}
\newcommand{\csmT}{{c^T_{\text{SM}}}}
\newcommand{\ssm}{{s_{\text{SM}}}}
\newcommand{\ssmT}{{s^T_{\text{SM}}}}
\newcommand{\csmTv}{{c^{T,\vee}_{\text{SM}}}}

\DeclareMathOperator{\stab}{stab}

\DeclareMathOperator{\pt}{pt}

\DeclareMathOperator{\End}{End}
\DeclareMathOperator{\Pic}{Pic}
\DeclareMathOperator{\id}{id}
\DeclareMathOperator{\loc}{loc}
\DeclareMathOperator{\Frac}{Frac}
\DeclareMathOperator{\BS}{BS}
\DeclareMathOperator{\SL}{SL}
\DeclareMathOperator{\Gr}{Gr}

\title{Structure constants for Chern classes of Schubert cells}
\author{Changjian Su}
\address{Department of Mathematics, University of Toronto, Toronto, ON, Canada}
\email{changjiansu@gmail.com}


\begin{document}

\begin{abstract}
A formula for the structure constants of the multiplication of Schubert classes is obtained in \cite{GK19}. In this note, we prove analogous formulae for the Chern--Schwartz--MacPherson (CSM) classes and Segre--Schwartz--MacPherson (SSM) classes of Schubert cells in the flag variety. By the equivalence between the CSM classes and the stable basis elements for the cotangent bundle of the flag variety, a formula for the structure constants for the latter is also deduced.
\end{abstract}
\maketitle

\section{Introduction}
In the equivariant cohomology of a flag variety, there is a natural basis given by the fundamental classes of Schubert varieties. It is well known that the structure constants of multiplication of this basis (or its dual basis) enjoy a positivity property \cite{G01}. This also holds in the equivariant K theory of the flag varieties \cite{B02,AGM11,K17}.

However, a manifestly positive formula for the structure constants (and its equivariant K theory analogue) is only completely known for the Grassmannians and 2-step partial flag variety, see \cite{KT03,Bu02,BKPT16,KZJ17}. In certain special cases, these structure constants are just the localizations of the basis elements, which are given by  positive formulae \cite{B99,AJS94}. Recently, a manifestly polynomial formula is found by Goldin and Knutson \cite{GK19} both in the equivariant cohomology and equivariant K theory of the complete flag varieties. 

Knutson communicated to the author that, with Zinn-Justin, they can produce puzzle formulae for 3-step and 4-step partial flag varieties \cite{KZJ}. Instead of considering the Schubert classes in the flag variety, they consider quotient of stable basis elements by the zero section class in the equivariant cohomology of the cotangent bundle of the partial flag variety, which has a natural $\bbC^*$ action by dilating the cotangent fibers. 

The stable basis (or stable envelope) is introduced by Maulik and Okounkov \cite{MO19} in their work on quantum cohomology of Nakajima quiver varieties. Using the stable envelope, they constructed geometrical solutions, called R matrices, of the Yang--Baxter equations. Through the general RTT formalism \cite[Section 5.2]{MO19}, a Yangian can be constructed from these geometric R matrices. By its very definition, the Yangian acts on the equivariant cohomology of the Nakajiama varieties, generalizing earlier constructions of Varagnolo \cite{V00} via correspondences.

The stable basis is not only defined for Nakajima quiver varieties, it is also defined for a large class of varieties called symplectic resolutions, among which the cotangent bundle of the flag variety is the most classical example. The stable basis for the cotangent bundle is studied in \cite{S17, RTV15}. By convolution \cite{CG10} and the result in \cite{L89}, the graded affine Hecke algebra acts on the cohomology of the cotangent bundle of the flag variety, and it is shown in \cite{S17} that the stable basis elements are permuted by the Hecke operators.

The graded affine Hecke algebra also appears in the work of Aluffi and Mihalcea \cite{AM16} on the Chern--Schwartz--MacPherson (CSM) class of Schubert cells. The CSM class theory is a Chern class theory for singular varieties, which was constructed by MacPherson \cite{M74}. In the case of the flag variety, the CSM class of a Schubert cell was conjectured by Aluffi and Mihalcea \cite{AM09,AM16} to be a non-negative linear combination of the Schubert classes. The Grassmannian case is proved by Huh \cite{H16}.

Aluffi and Mihalcea show that the CSM classes of the Schubert cells are also permuted by the Hecke operators \cite{AM16}. Thus, the pullback of the stable basis elements to the flag variety are identified with the CSM classes of the Schubert cells, see \cite{AMSS17,RV15}. Besides, the stable basis elements are related to the characteristic cycles of regular holonomic $\calD$ modules on the flag varieties, which are effective by definition. This observation is used in \cite{AMSS17} to prove the non-equivariant positivity conjecture of Aluffi and Mihalcea for any (partial) flag varieties. We refer interested readers to the survey papers \cite{O15,O18,SZ19,S} for more applications of the stable envelopes to representation theory and enumerative geometry problems. 

Now we have identified the numerator of the classes considered by Knutson and Zinn-Justin \cite{KZJ}, the stable basis elements, with the CSM classes. On the other hand, if we pullback the class of the zero section in the $\bbC^*$-equivariant cohomology of the cotangent bundle, and set the $\bbC^*$-equivariant parameter to 1, we get the total Chern class of the flag variety up to a sign. Thus, the classes considered in \textit{loc. cit.} can be identified with the quotient of the CSM classes of the Schubert cells by the total Chern class of the flag variety, which are called the Segre--Schwartz--MacPherson (SSM) classes of the Schubert cells, see \cite{AMSS19a}. Under the non-degenerate Poincar\'e pairing on the equivariant cohomology of the flag variety, the CSM classes and SSM classes are dual to each other, just as the usual Schubert classes and the opposite ones. 

The main Theorem of this note is a formula for the structure constants of the SSM classes of the Schubert cells. To state it, let us introduce some notation. Let $G$ be a complex Lie group with Borel subgroup $B$ and maximal torus $T$. For any $w$ in the Weyl grop $W$, let $Y(w)^\circ:=B^-wB/B\subset G/B$ be the opposite Schubert cell in the flag variety, where $B^-$ is the opposite Borel subgroup. The SSM classes are denoted by $\ssmT(Y(w)^\circ)$, see Section \ref{sec:Chern}. Let $c_{u,v}^w$ be the structure constants of $\{\ssmT(Y(w)^\circ)|w\in W\}$. For any simple root $\alpha$, let $\partial_\alpha$ denote the following operator on $H_T^*(\pt)=\bbC[\ft]$:
\[\partial_\alpha(f)=\frac{f-s_\alpha(f)}{\alpha},\]
where $s_\alpha(f)$ is the usual Weyl group action on $f\in H_T^*(\pt)$. Let $T^\vee_\alpha:=\partial_\alpha+s_\alpha\in \End_\bbC H_T^*(\pt)$. Extend naturally these operators to the fraction field $\Frac H_T^*(pt)$. The formula is
\begin{thm}\label{thm:main}
For any $u,v,w\in W$, let $Q$ be a reduced word for $w$. Then
\[
c_{u,v}^w=\sum_{\substack{R, S\subset Q,\\ \prod R=u,\prod S=v}}\left(\prod_{q\in Q}\frac{\alpha_q^{[q\in R\cap S]}}{1+\alpha_q}s_q(-T^\vee_q)^{[q\notin R\cup S]}\right)\cdot 1\in \Frac H_T^*(pt),\]
where the exponent $``[\sigma]"$ is 1 if the statement $\sigma$ is true, 0 otherwise.
\end{thm}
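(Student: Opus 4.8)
The plan is to mimic the Goldin--Knutson strategy from \cite{GK19}, but with the Demazure operators replaced by the Hecke operators that govern the CSM/SSM classes. The starting point is the recursion: the SSM classes $\ssmT(Y(w)^\circ)$ are generated from the point class $\ssmT(Y(e)^\circ)$ by applying, for each simple reflection $s_\alpha$, a normalized Hecke-type operator — call it $\calA_\alpha$ — which (by the Aluffi--Mihalcea result recalled in the introduction, and its SSM counterpart in \cite{AMSS19a}) satisfies $\calA_\alpha \ssmT(Y(w)^\circ) = \ssmT(Y(s_\alpha w)^\circ)$ when $\ell(s_\alpha w)>\ell(w)$, and reproduces $\ssmT(Y(w)^\circ)$ (up to the known scalar) otherwise. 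Concretely $\calA_\alpha$ is built from $T^\vee_\alpha = \partial_\alpha + s_\alpha$ together with the multiplication-by-$\frac{1}{1+\alpha}$ factor that converts the cohomological CSM recursion into the SSM one; I would pin down its exact form first, since the shape of the summand in the theorem — the product over $q\in Q$ of $\frac{\alpha_q^{[q\in R\cap S]}}{1+\alpha_q}\,s_q(-T^\vee_q)^{[q\notin R\cup S]}$ — is exactly what the ``leaf-by-leaf'' expansion of these operators should produce.

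Next, fix a reduced word $Q = (q_1,\dots,q_\ell)$ for $w$ and write $\ssmT(Y(w)^\circ) = \calA_{q_1}\cdots \calA_{q_\ell}\cdot \ssmT(Y(e)^\circ)$. The structure constant $c_{u,v}^w$ is extracted by the duality between CSM and SSM classes under the Poincar\'e pairing (recalled in the introduction): $c_{u,v}^w = \langle \ssmT(Y(u)^\circ)\cdot \ssmT(Y(v)^\circ),\, \csmT(X(w)^\circ)\rangle$ or the equivalent localization expression. Following \cite{GK19}, I would then expand each operator $\calA_{q_i}$ as a sum over the three ``states'' a letter can be in — contributing to the subword $R$ for $u$, to the subword $S$ for $v$, or to neither — which corresponds to the decomposition of $\calA_\alpha$ (or of the relevant AJS/Billey-type localization formula) into a ``divided difference'' piece, a ``reflection'' piece, and a ``keep'' piece. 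Multiplying these choices over all $\ell$ letters produces a sum over pairs of subwords $(R,S)$; matching each piece with the corresponding factor $\frac{\alpha_q}{1+\alpha_q}$ (when $q\in R\cap S$), $\frac{1}{1+\alpha_q}$ (when $q$ is in exactly one of $R,S$), or $\frac{1}{1+\alpha_q}s_q(-T^\vee_q)$ (when $q\notin R\cup S$), and applying the resulting operator to $1\in\Frac H_T^*(\pt)$, gives precisely the claimed formula. The constraints $\prod R = u$ and $\prod S = v$ arise because only subwords that multiply out to $u$ (resp.\ $v$) survive in the localization pairing against $\csmT(X(u)^\circ)$ (resp.\ against the class indexed by $v$), exactly as the Demazure-product condition appears in \cite{GK19}.

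The main obstacle I expect is bookkeeping the three-state expansion correctly — in particular getting the $s_q$ conjugations and the ordering of operators right. The operators $T^\vee_\alpha$ do not commute with multiplication by rational functions, and they do not commute with each other, so when one commutes a localization/restriction past the string of operators $\calA_{q_1}\cdots\calA_{q_\ell}$ one picks up exactly the $s_q(-)$ twists seen in the formula; verifying that these twists assemble into the stated product (read in the correct order along $Q$) is the delicate step. A secondary technical point is the precise normalization constant on the ``keep'' piece when $\ell(s_\alpha w)<\ell(w)$ and checking that the $\frac{1}{1+\alpha_q}$ factors distribute uniformly over all letters regardless of state — this is what makes the final formula uniform. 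Once the operator identity is established as an identity in $\End_\bbC \Frac H_T^*(\pt)$, independence of the answer from the chosen reduced word $Q$ is automatic since the left-hand side $c_{u,v}^w$ does not depend on $Q$; alternatively one can check it directly from the braid relations satisfied by the $\calA_\alpha$, which follow from those for the Hecke operators.
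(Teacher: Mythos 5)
Your plan correctly guesses the \emph{shape} of the answer (a letter-by-letter recursion with three states per letter, matching the three factors $\frac{\alpha_q}{1+\alpha_q}s_q$, $\frac{1}{1+\alpha_q}s_q$, $\frac{1}{1+\alpha_q}s_q(-T^\vee_q)$), but the two steps you defer are exactly where the content of the theorem lies, and neither is routine. First, the ``three-state expansion'' of the operators: $\calT^\vee_i=\partial_i+s_i$ is not a derivation, and the twisted Leibniz rule it does satisfy, $\calT^\vee_i(ab)=\partial_i(a)\,b+s_i(a)\,\calT^\vee_i(b)$, is a two-term, asymmetric identity; turning it into the symmetric subword expansion with the stated weights, while simultaneously handling the normalization $\ssmT(Y(w)^\circ)=\csmTv(Y(w)^\circ)/\prod_{\alpha>0}(1+\alpha)$ whose denominator is \emph{not} $W$-invariant (so your operator $\calA_\alpha$ is not simply $\frac{1}{1+\alpha}\calT^\vee_\alpha$ and you never pin it down), is precisely the theorem to be proved. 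Second, and more fundamentally: on $G/B$ itself, two distinct subwords $R\neq R'$ of $Q$ with $\prod R=\prod R'=u$ index the \emph{same} class $\ssmT(Y(u)^\circ)$, so ``the sum over pairs of subwords'' is not a decomposition of any object living on $G/B$. You need a space or an algebraic formalism in which the individual subwords are separated before you can speak of one term per pair $(R,S)$.

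The paper supplies exactly this missing device: it passes to the Bott--Samelson variety $\BS^Q$, whose $T$-fixed points are literally the subwords of $Q$. There one has the basis of CSM classes of cells $\csmT(\BS^J_\circ)$ and its Poincar\'e-dual basis $\{T_J\}$, with $\pi_*\csmT(\BS^J_\circ)=\csmT(X(\prod J)^\circ)$ and $\pi^*\ssmT(Y(u)^\circ)=\sum_{\prod R=u}T_R$; consequently $c_{u,v}^w=\sum_{\prod R=u,\,\prod S=v}b_{R,S}^Q$, where $b_{R,S}^J$ are the structure constants of the $T_J$. These are then computed by an explicit localization formula for $T_S|_R$ together with a recursion obtained by restricting $T_RT_S=\sum_J b_{R,S}^JT_J$ to the fixed point $Q$ and peeling off the first letter; the three cases ($s_\alpha$ in both of $R,S$, in exactly one, or in neither) produce the three factors. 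If you want to avoid Bott--Samelson varieties you would have to set up the Goldin--Knutson structure-operator formalism in full for the degenerate Hecke operators and verify the resulting recursion closes --- which is a genuinely different proof, but one your proposal only gestures at rather than executes.
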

This is generalized to the partial flag variety case in Theorem \ref{thm:Pcase}. In the non-equivariant limit, this formula computes the topological Euler characteristic of the intersection of three Schubert cells in general positions, see Equation \eqref{equ:nonequiv}. By \cite[Theorem 1.2]{Sch17}, these non-equivariant limit constants also compute the non-equivariant SSM/CSM classes of Richardson cells in terms of SSM/CSM classes of the Schubert cells. 

Since the CSM classes behave well under pushforward, while the SSM classes behave well under pullback (see Lemma \ref{lem:relations}), the proof of Theorem \ref{thm:main} can not be applied to the CSM classes directly. Nonetheless, 
using the relation between the CSM classes and the SSM classes, we can have a formula for the structure constants for the CSM classes of Schubert cells in the complete flag variety, see Theorem \ref{thm:csm}. By the equivalence between the CSM classes and the stable basis for the cotangent bundle of the flag variety \cite{AMSS17,RV15}, we also get the structure constants for the stable basis, see Theorem \ref{thm:stablecstru}. However, these does not generalize to the partial flag variety case.

Goldin and Knutson \cite{GK19} also have a formula in the equivariant K theory of the flag variety. The K-theoretic generalization of the CSM class (resp. SSM class) is the motivic Chern class (resp. Segre motivic Chern class) \cite{BSY}, and the motivic Chern classes of the Schubert cells are also related to the K theory stable basis elements \cite{O17,SZZ17,AMSS19b,FRW18,SZ19}. Unfortunately, the author fails to generalize Theorem \ref{thm:main} to the case of Segre motivic Chern classes, due to the fact that the Hecke operators in equivariant K theory satisfy a quadratic relation, see Remark \ref{rem:k} for more details. Nonetheless, most of the results in Sections \ref{sec:BS} and \ref{sec:proof} can be naturally extended to the equivariant K theory.

This note is structured as follows. In Section \ref{sec:Chern}, we introduce the CSM/SSM classes and recall their basic properties. In Section \ref{sec:BS}, we focus on Bott--Samelson varieties and relate the CSM classes of Schubert cells to the CSM classes of the cells in the Bott--Samelson varieties. By this relation, the structure constants for the SSM classes of Schubert cells are certain linear combinations of the structure constants for some basis in the equivariant cohomology of the Bott--Samelson varieties, and the main theorem is reduced to Theorem \ref{thm:structure2}, which is proved by induction in Section \ref{sec:proof}. Finally, the result is extended to the parabolic case in Section \ref{sec:P}.

{\em Acknowledgments.} The author thanks A. Knutson and A. Yong for discussions. Special thanks go to L. Mihalcea for providing the proof of Theorem \ref{thm:csm}.

\subsection*{Notation.}
Let $G$ be a complex Lie group, with a Borel subgroup $B$ and a maximal torus $T\subset B$. Let $\ft$ be the Lie algebra of the maximal torus. Then the equivariant cohomology of a point $H_T^*(\pt)$ is identified with $\bbC[\ft]$. Let $R^+$ denote the roots in $B$, and $W$ be the Weyl group with Bruhat order $\leq$ and the longest element $w_0$. For any root $\alpha$, let us use $\alpha>0$ to denote $\alpha\in R^+$. For any $w\in W$, let $X(w)^\circ=BwB/B$ and $Y(w)^\circ=B^-wB/B$ be the Schubert cells in the flag variety $G/B$. Let $X(w)=\overline{X(w)^\circ}$ and $Y(w)=\overline{Y(w)^\circ}$ be the Schubert varieties.

\section{Chern classes of Schubert cells}\label{sec:Chern}
In this section, we first recall the definitions of Chern--Schwartz--MacPherson (CSM) classes and Segre--Schwartz--MacPherson (SSM) classes. Then we consider the case of the flag variety, and recall basic properties of the CSM and SSM classes of the Schubert cells.

\subsection{Preliminaries}
Let us first recall the definition of Chern--Schwartz--MacPherson classes. For any quasi projective variety $X$ over $\bbC$, let $\calF(X)$ denote the group of constructible functions on $X$, i.e., $\calF(X)$ consists of functions $\varphi = \sum_Z c_Z \one_Z$, where the sum is over a finite set of constructible subsets $Z \subset X$, $\one_Z$ is the characteristic function of $Z$ and $c_Z \in \bbZ$ are integers. If $f:Y\rightarrow X$ is a proper morphism, we can define a pushforward $f_*:\calF(Y)\rightarrow \calF(X)$ by setting $f_*(\one_Z)(p)=\chi(f^{-1}(p)\cap Z)$, where $Z\subset Y$ is a locally closed subvariety, $p\in X$, and $\chi$ is the topological Euler characteristic. According to a conjecture attributed to Deligne and Grothendieck, there is a unique natural transformation $c_*: \calF \to H_*$ from the functor $\calF$ of constructible functions on a complex quasi projective variety to the homology functor, such that if $X$ is smooth then $c_*(\one_X)=c(TX)\cap [X]$, where $c(TX)$ is the total Chern class. The naturality of $c_*$ means that it commutes with proper pushforward. This conjecture was proved by MacPherson \cite{M74}. The class $c_*(\one_X)$ for possibly singular $X$ was shown to coincide with a class defined earlier by M.-H.~Schwartz \cite{S65a, S65b}. For any constructible subset $Z\subset X$, we call the class $c_{SM}(Z):=c_*(\one_Z)\in H_*(X)$ the \textit{Chern--Schwartz--MacPherson} (CSM) class of $Z$ in $X$. If $X$ is smooth, we call $\ssm(Z):=\frac{c_*(\one_Z)}{c_*(\one_X)}\in H_*(X)$ the \textit{Segre--Schwartz--MacPherson} (SSM) class of $Z$ in $X$ \footnote{If $X$ is not smooth, we can embed $X$ into a smooth ambient space, and use the total Chern class of the ambient space to define the SSM classes, see \cite{AMSS19a}.}. 

The theory of CSM classes was later extended to the equivariant setting by Ohmoto \cite{O06}. Assume that $X$ has a $T$ action. A group $\calF^T(X)$ of {\em equivariant\/} constructible functions is defined by Ohmoto in \cite[Section 2]{O06}. We recall the main properties that we need:
\begin{enumerate} 
\item If $Z \subseteq X$ is a constructible set which is invariant under the $T$-action, its characteristic function $\one_Z$ is an element of $\calF^T(X)$. We will denote by $\calF_{inv}^{T}(X)$ the subgroup of $\calF^{T}(X)$ consisting of $T$-invariant constructible functions 
on $X$. (The group $\calF^T(X)$ also contains other elements,
  but this will be immaterial for us.)
\item Every proper $T$-equivariant morphism $f: Y \to X$ of algebraic varieties induces a homomorphism $f_*^T: \calF^T(X) \to \calF^T(Y)$. The restriction of $f_*^T$ to $\calF_{inv}^{T}(X)$ coincides with the ordinary push-forward $f_*$ of constructible functions. See \cite[Section 2.6]{O06}.
\end{enumerate}

Ohmoto proves \cite[Theorem 1.1]{O06} that there is an equivariant version of MacPherson transformation 
\[c_*^T: \calF^T(X) \to H_*^T(X)\] 
that satisfies $c_*^T( \one_X) = c^T(TX) \cap [X]_T$ if $X$ is a non-singular variety, and that is functorial with respect to proper push-forwards. The last statement means that for all proper $T$-equivariant morphisms $Y\to X$ the following diagram commutes:
\[ 
\xymatrix{ 
\calF^T(Y) \ar[r]^{c_*^T} \ar[d]_{f_*^T} & H_*^T(Y) \ar[d]^{f_*^T} \\ 
\calF^T(X) \ar[r]^{c_*^T} & H_*^T(X) 
}
\] 

\begin{defin}
Let $Z$ be a $T$-invariant constructible subset of $X$. We denote by $\csmT(Z):=c_*^T(\one_{Z}) {~\in H_*^T(X)}$ the $T$-{\em equivariant Chern--Schwartz--MacPherson (CSM) class\/} of $Z$. If $X$ is smooth, we denote by $\ssmT(Z):=\frac{c_*^T(\one_{Z})}{c^T(\one_X)} {~\in H_*^T(X)}$ the $T$-{\em equivariant Segre--Schwartz--MacPherson (SSM) class\/} of $Z$.
\end{defin}

\subsection{CSM and SSM classes of Schubert cells}
In this section, we recall some basic properties of the CSM and SSM classes of the Schubert cells \cite{AM16,AMSS17}. 

\subsubsection{Schubert classes}
The maximal torus $T$ acts on the flag variety $G/B$ by left multiplication. Since $G/B$ is smooth and projective, we identity the $T$-equivariant homology group $H_*^T(G/B)$ with the $T$-equivariant cohomolgoy $H_T^*(G/B)$, which is a module over $H_T^*(\pt)=\bbC[\ft]$. Therefore, we regard the CSM and SSM classes in $H_T^*(G/B)$. The torus fixed points on $G/B$ are in one-to-one correspondence with the Weyl group. For any $w\in W$, $wB\in G/B$ is the corresponding fixed point. For any $\gamma\in H_T^*(G/B)$, let $\gamma|_w\in H_T^*(\pt)$ denote the restriction of $\gamma$ to the fixed point $wB$.

There is a non-degenerate Poincar\'e pairing $\langle-,-\rangle$ on $H_T^*(G/B)$. A natural basis for $H_T^*(G/B)$ is formed by the Schubert classes $\{[X(w)]|w\in W]\}$, with dual basis the opposite Schubert classes $\{[Y(w)]|w\in W\}$. I.e., $\langle[X(w)],[Y(u)]\rangle=\delta_{w,u}$ for any $w,u\in W$. It is well known that the structure constants for the multiplication of the basis $\{[Y(w)]|w\in W\}$ is non-negative \cite{G01}. However, a manifestly positive formula for the structure constants (and its equivariant K theory analogue) is only known in special cases, see \cite{KT03,Bu02,KZJ17,B02,B99,AJS94}. Nevertheless, a manifestly polynomial formula is obtained in \cite{GK19}.

\subsubsection{Hecke action}\label{sec:hecke}
For any simple root $\alpha_i$, let $P_i$ be the minimal parabolic subgroup containing the Bore subgroup $B$. Let $\pi_i:G/B\rightarrow G/P_i$ denote the projection. Then the divided difference operator is $\partial_i:=\pi_i^*\pi_{i*}\in \End_{H_T^*(\pt)}H^*_T(G/B)$. On $H^*_T(G/B)$, we also have a right Weyl group action  induced by the fibration $G/T\rightarrow G/B$ and the right Weyl group action on $G/T$. For any torus weight $\lambda$, let $\calL_\lambda:=G\times_B\bbC_\lambda\in \Pic_T(G/B)$. Then as operators on $H^*_T(G/B)$, we have \cite{BGG73}
\[s_i=\id+c_1^T(\calL_{\alpha_i})\partial_i.\]
Following \cite{AM16,AMSS17}, define \footnote{These operators are the $\hbar=1$ specialization of $L_i$ and $L^\vee_i$ in \cite[Section 5.2]{AMSS17}.} 
\[\calT_i:=\partial_i-s_i, \textit{\quad and \quad} \calT^\vee_i:=\partial_i+s_i.\]
Then they are adjoint to each other, see \cite[Lemma 5.2]{AMSS17}. I.e., for any $\gamma_1,\gamma_2\in H_T^*(G/B)$, 
$\langle\calT_i(\gamma_1),\gamma_2\rangle=\langle\gamma_1,\calT^\vee_i(\gamma_2)\rangle$. Besides, it is proved in \cite[Proposition 4.1]{AM16} that the $\calT_i$'s satisfy the usual relation in the Weyl group. By adjointness and nondegeneracy of the pairing, the $\calT^\vee_i$'s also satisfy the same relations. Thus, for any $w\in W$, we can form $\calT_w$ and $\calT^\vee_w$. These operators $\calT_w$'s (or $\calT^\vee_w$'s) together with the multiplication by the first Chern classes $c_1^T(\calL_\lambda)$ give an action of the degenerate affine Hecke algebra on $H^*_T(G/B)$.

\subsubsection{CSM and SSM classes}\label{sec:CSMSSM}
For any $w\in W$, the CSM class of the Schubert cell $X(w)^\circ$ is described very easily by the Hecke operators as follows (\cite[Corollary 4.2]{AM16})
\begin{equation}\label{equ:csmhecke}
\csmT(X(w)^\circ)=\calT_{w^{-1}}([X(id)]),
\end{equation}
where $[X(id)]$ is the point class $[B]\in H_T^*(G/B)$. Expand the CSM classes in the Schubert classes
\[\csmT(X(w)^\circ)=\sum_{u\leq w}c^T(u;w)[X(u)]\in H_T^*(G/B),\]
where $c^T(u;w)\in H_T^*(\pt)$. The leading coefficient is $c^T(w;w)=\prod_{\alpha>0,w\alpha<0}(1-w\alpha)$, see \cite[Proposition 6.5]{AM16}. Therefore, the transition matrix between the CSM classes of the Schubert cells and the Schubert varieties are triangular with non-zero diagonals. Thus, $\{\csmT(X(w)^\circ)|w\in W\}$ is a basis for the localized equivariant cohomology $H_T^*(G/B)_{\loc}:=H_T^*(G/B)\otimes_{H_T^*(\pt)}\Frac H_T^*(\pt)$, where $\Frac H_T^*(\pt)$ is the fraction field of $H_T^*(\pt)$. Since the non-equivariant limit of the leading coefficients $c(w;w)=1$, the non-equivariant CSM classes of Schubert cells $\{\csm(X(w)^\circ)|w\in W\}$ forms a basis for the cohomolgoy $H^*(G/B)$. 

The CSM classes of the Schubert cells can be identified with the Maulik and Okounkov's stable basis elements (see Section \ref{sec:stable}) for the cotangent bundle of the flag variety, which are related to representation of the Lie algebra of $G$, see \cite{MO19, S17, AMSS17, SZ19}. This type of relation plays an important role in the proof of the non-equivariant case of the positivity conjecture of Aluffi--Mihalcea. I.e., it is conjectured in \cite{AM16} that \[c^T(u;w)\in \bbZ_{\geq 0}[\alpha|\alpha>0].\] The non-equivariant case is proved in \cite{AMSS17}.
 
Recall the SSM class of a Schubert cell $Y(w)^\circ$ is defined by
\[\ssmT(Y(w)^\circ):=\frac{\csmT(Y(w)^\circ)}{c^T(T(G/B))}\in H_T^*(G/B)_{\loc}.\]
Since $c^T(T(G/B))\cup c^T(T^*(G/B))=\prod_{\alpha>0}(1-\alpha^2)\in H^*(G/B)$, see \cite[Lemma 8.1]{AMSS17}, \[\ssmT(Y(w)^\circ)=\frac{\csmT(Y(w)^\circ)c^T(T^*(G/B))}{\prod_{\alpha>0}(1-\alpha^2)}\in \frac{1}{\prod_{\alpha>0}(1-\alpha^2)}H_T^*(G/B).\]
It is easy to see that $\{\ssmT(Y(w)^\circ)|w\in W\}$ is a basis for the localized equivariant cohomology $H_T^*(G/B)_{\loc}$, and in the non-equivariant cohomology, 
\[\ssm(Y(w)^\circ)=\csm(Y(w)^\circ)\cup c(T^*(G/B))=[Y(w)]+\cdots,\] 
where $\cdots$ denotes some element in $H^{> 2\ell(w)}(G/B)$. Therefore, the non equivariant SSM classes $\{\ssm(Y(w)^\circ)|w\in W\}$ form a basis for $H^*(G/B)$.

By \cite[Theorem 7.3]{AMSS17} (after the specialization $\hbar=1$), we have \footnote{Here we have used the fact $(-1)^{\dim G/B}e^{T\times \bbC^*}(T^*(G/B))|_{\hbar=1}=c^T(T(G/B))$, see the proof of Corollary 7.4 in \textit{loc. cit.}.}
\begin{equation}\label{equ:ssmTvee}
\ssmT(Y(w)^\circ)=\frac{1}{\prod_{\alpha>0} (1+\alpha)}\csmTv(Y(w)^\circ).
\end{equation}
Here $\csmTv(Y(w)^\circ)$ is defined to be $\calT^\vee_{w^{-1}w_0}([Y(w_0)])$, where $[Y(w_0)]$ is the point class $[w_0B]\in H_T^*(G/B)$, see \cite[Definition 5.3]{AMSS17}.

The CSM classes and the SSM classes are dual to each other, i.e., for any $w,u\in W$, we have (\cite[Theorem 9.4]{AMSS17})
\begin{equation}\label{equ:duality}
\langle\csmT(X(w)^\circ), \ssmT(Y(u)^\circ)\rangle=\delta_{w,u}.
\end{equation}

\subsection{Structure constants for the SSM classes}
Define the structure constants for the multiplication of the SSM classes by the following formula
\begin{equation}\label{equ:structure1}
\ssmT(Y(u)^\circ)\cup \ssmT(Y(v)^\circ)=\sum_w c_{u,v}^w\ssmT(Y(w)^\circ)\in H_T^*(G/B)_{\loc},
\end{equation}
where $c_{u,v}^w\in \Frac H_T^*(\pt)$. It is easy to see that $w\geq u,v$ in the above summand, and
\[c_{u,w}^w=\ssmT(Y(u)^\circ)|_w.\]

Fix a reduced word $Q=s_{\alpha_{1}}s_{\alpha_{2}}\cdots s_{\alpha_{l}}$ for $w$. For any subword $R\subset Q$, let $\prod R$ denote the product of the simple reflections in $R$. Since a simple reflection can appear more than twice in the a $Q$, we will use the notation $s_1--$ and $--s_1$ to denote the two different subwords in $Q=s_1s_2s_1$. The localization of the CSM classes is (\cite[Corollary 6.7]{AMSS17} with $\hbar=1$)
\begin{equation}\label{equ:csmlocalization}
\csmT(Y(u)^\circ)|_w=\prod_{\alpha>0,w\alpha>0}(1-w\alpha)\sum_{\substack{R\subset Q,\\\prod R=u}}\prod_{i\in R}(\prod_{j\in Q, j< i}s_j) \alpha_i.
\end{equation}
Therefore, 
\begin{equation}\label{equ:restriction}
c_{u,w}^w=\frac{\csmT(Y(u)^\circ)|_w}{c^T(T(G/B))|_w}=\frac{\sum_{\substack{R\subset Q,\\\prod R=u}}\prod_{i\in R}(\prod_{j\in Q, j< i}s_j) \alpha_i}{\prod_{\alpha>0,w\alpha<0}(1-w\alpha)}.
\end{equation}

By Equation \eqref{equ:duality}, we have
\[c_{u,v}^w=\langle\ssmT(Y(u)^\circ)\cup \ssmT(Y(v)^\circ), \csmT(X(w)^\circ)\rangle.\]
Taking the non-equivariant limit and using \cite[Theorem 1.2]{Sch17}, we get 
\begin{equation}\label{equ:nonequiv}
c_{u,v}^w=\chi\left(Y(u)^\circ\cap gY(v)^\circ\cap hX(w)^\circ\right),
\end{equation}
where $\chi$ denotes the topological Euler characteristic, and $g,h\in G$, such that $Y(u)^\circ$, $gY(v)^\circ$ and $hX(w)^\circ$ intersect transversally. Here we also used the fact that for any constructible function $\varphi$ on $G/B$, $\int_{G/B} c_*(\varphi)=\chi(G/B,\varphi)$, which follows directly from the functoriality of the MacPherson transformation $c_*$ applied to the morphism $G/B\rightarrow \pt$. Besides, in the non-equivariant limit, the left hand side of Equation \eqref{equ:structure1} is the SSM class of a Richardson cell by \cite[Theorem 1.2]{Sch17}. Thus, these constants $c_{u,v}^w$ are the expansion coefficients of the SSM (resp. CSM) classes of the Richardson celles in the SSM (resp. CSM) classes of the Schubert cells.

For any simple root $\alpha$, let $\partial_\alpha$ denote the following operator on $H_T^*(\pt)=\bbC[\ft]$:
\[\partial_\alpha(f)=\frac{f-s_\alpha(f)}{\alpha},\]
where $s_\alpha(f)$ is the usual Weyl group action on $f\in H_T^*(\pt)$. Define $T^\vee_\alpha:=\partial_\alpha+s_\alpha\in \End_\bbC H_T^*(\pt)$. Extend naturally these operators to the fraction field $\Frac H_T^*(pt)$.
 
The main theorem of this note is the following formula for the structure constants $c_{u,v}^w$.
\begin{thm}\label{thm:structure1}
For any $u,v,w\in W$, let $Q$ be a reduced word for $w$. Then
\begin{equation*}
c_{u,v}^w=\sum_{\substack{R, S\subset Q,\\ \prod R=u,\prod S=v}}\left(\prod_{q\in Q}\frac{\alpha_q^{[q\in R\cap S]}}{1+\alpha_q}s_q(-T^\vee_q)^{[q\notin R\cup S]}\right)\cdot 1,
\end{equation*}
where the exponent $``[\sigma]"$ is 1 if the statement $\sigma$ is true, 0 otherwise.
\end{thm}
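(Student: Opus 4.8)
The plan is to lift the computation to a Bott--Samelson resolution of $X(w)$, where the relevant classes and their product become combinatorially transparent, and then to establish the resulting identity by induction on $\ell(w)$ using the iterated $\mathbb{P}^1$-bundle structure. Fix the reduced word $Q=s_{\alpha_1}\cdots s_{\alpha_l}$ for $w$ and let $\mu_Q\colon Z_Q\to G/B$ be the associated Bott--Samelson variety: a smooth projective $T$-variety, an iterated $\mathbb{P}^1$-bundle, which maps properly onto $X(w)$ and restricts to an isomorphism over $X(w)^\circ$. If $Z_Q^\circ\subset Z_Q$ is the open dense cell, then $\mu_{Q*}\one_{Z_Q^\circ}=\one_{X(w)^\circ}$ as invariant constructible functions (every fibre of $\mu_Q$ meeting $Z_Q^\circ$ is a point, since $Z_Q^\circ=\mu_Q^{-1}(X(w)^\circ)$), so functoriality of $c^T_*$ gives $\mu_{Q*}\csmT(Z_Q^\circ)=\csmT(X(w)^\circ)$. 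Combining the duality \eqref{equ:duality} with the projection formula and pushing forward to a point,
\[
c_{u,v}^w=\big\langle \ssmT(Y(u)^\circ)\cup\ssmT(Y(v)^\circ),\,\csmT(X(w)^\circ)\big\rangle=\int_{Z_Q}\mu_Q^*\ssmT(Y(u)^\circ)\cup\mu_Q^*\ssmT(Y(v)^\circ)\cup\csmT(Z_Q^\circ),
\]
so the entire computation now takes place on $Z_Q$.

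Next I would make the right-hand side combinatorial. The $T$-fixed points of $Z_Q$ are indexed by subwords $R\subseteq Q$, with $\mu_Q(e_R)=(\prod R)B$, hence $(\mu_Q^*\gamma)|_R=\gamma|_{\prod R}$; in particular the fixed-point restrictions of $\mu_Q^*\ssmT(Y(u)^\circ)$ are controlled by the SSM localization formula \eqref{equ:restriction}. The closures of the cells $Z_R^\circ$ form a subword-indexed basis of $H^*_T(Z_Q)$, and via the $\mathbb{P}^1$-bundle tower the restrictions $\csmT(Z_Q^\circ)|_R$ and the tangent Euler classes $e(T_{e_R}Z_Q)$ are given by explicit subword formulas. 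Plugging these into the localization formula for $\int_{Z_Q}$ turns the displayed integral into a double sum over a subword $R$ contributing to $u$ and a subword $S$ contributing to $v$, and the claim reduces to identifying the $(R,S)$-summand with $\big(\prod_{q\in Q}\tfrac{\alpha_q^{[q\in R\cap S]}}{1+\alpha_q}s_q(-T^\vee_q)^{[q\notin R\cup S]}\big)\cdot 1$. Equivalently, one extracts from the CSM classes of the cells $Z_R^\circ$ an explicit basis $\{\xi_R\}$ of $H^*_T(Z_Q)_{\loc}$, checks that $\mu_Q$ carries suitable $\xi$-combinations to the SSM classes of Schubert cells so that $c_{u,v}^w=\sum_{\prod R=u,\,\prod S=v}d_{R,S}$, and then computes the structure constants $d_{R,S}$; this is the reduction to Theorem \ref{thm:structure2}.

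To prove that identity I would induct on the length of $Q$. Write $Q=Q'q$ with $q$ the last letter and let $\pi\colon Z_Q\to Z_{Q'}$ be the $\mathbb{P}^1$-bundle projection. The operators $\partial_q$, $s_q$ and $T^\vee_q$ are compatible with $\pi^*$ and with fibrewise integration $\pi_*$ — this is the $\hbar=1$ shadow of the Hecke calculus recalled in Section \ref{sec:hecke} — so splitting each sum according to whether the last position of $Q$ lies in $R\cap S$, in $R\setminus S$, in $S\setminus R$, or in none of them, and performing push--pull along $\pi$, reduces the identity for $Q$ to the identity for $Q'$ together with a single rank-one computation on the fibre $\mathbb{P}^1$. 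The four cases produce precisely the four possible last factors $\tfrac{\alpha_q}{1+\alpha_q}s_q$, $\tfrac1{1+\alpha_q}s_q$, $\tfrac1{1+\alpha_q}s_q$, and $\tfrac1{1+\alpha_q}s_q(-T^\vee_q)$, and the base case is the empty word.

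The main obstacle is the bookkeeping in this inductive step: $s_q$ and $T^\vee_q$ do not commute with multiplication by the roots, so one must be scrupulous about the order in which operators act and about how the factor $\tfrac1{1+\alpha_q}$ — which traces back to $\ssmT=\csmT/c^T(T(G/B))$ and to \eqref{equ:ssmTvee} — migrates through the product as the last $\mathbb{P}^1$-factor is peeled off. A secondary nuisance is that a subword $R\subseteq Q$ need not itself be reduced, so $\ell(\prod R)<|R|$ can occur and \eqref{equ:restriction} must be invoked with a reduced word for $\prod R$ chosen compatibly with $Q$, and one has to check the bookkeeping stays consistent. Getting the four-case split exactly, with no residual correction term, is where the real work lies.
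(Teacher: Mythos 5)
Your first half — lifting to the Bott--Samelson variety, using $\mu_{Q*}\csmT(Z_Q^\circ)=\csmT(X(w)^\circ)$ together with the projection formula and the duality \eqref{equ:duality}, and reducing $c_{u,v}^w$ to structure constants $d_{R,S}$ of a subword-indexed basis on $Z_Q$ — is essentially the paper's route (Lemma \ref{lem:relations} and the reduction to Theorem \ref{thm:structure2}). Note, though, that the paper proves the Bott--Samelson identity for \emph{arbitrary}, not necessarily reduced, words $Q$; this is what dissolves your ``secondary nuisance'' about non-reduced subwords, since one never needs to invoke \eqref{equ:restriction} at $\prod J$ for a non-reduced $J$ — all localizations are computed intrinsically on $\BS^Q$ via an explicit formula for the dual basis $T_S|_R$ (Proposition \ref{prop:restriction}).

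The genuine gap is in your inductive step. The target expression is an \emph{ordered} product of non-commuting operators applied to $1$, and the last letter of $Q$ contributes the \emph{innermost} factor: if $Q=Q'q$ then
\[
\Bigl(\prod_{p\in Q}A_p\Bigr)\cdot 1=\Bigl(\prod_{p\in Q'}A_p\Bigr)\bigl(A_q\cdot 1\bigr),
\qquad A_q\cdot 1=\pm\frac{\alpha_q^{[q\in R\cap S]}}{1+\alpha_q},
\]
so after peeling the last $\mathbb{P}^1$-factor you need the value of the operator string for $Q'$ on a nontrivial rational function, whereas the inductive hypothesis only gives its value on $1$. Since $s_p$ and $T^\vee_p$ do not commute with multiplication by $\frac{\alpha_q}{1+\alpha_q}$, you cannot pull this factor out, and your claim that ``the four cases produce precisely the four possible last factors \dots with no residual correction term'' cannot be verified as stated; closing the induction this way would require computing the full operator string on arbitrary inputs, a much stronger statement. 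The paper avoids this by peeling the \emph{first} letter (the outermost operator): writing $Q=s_\alpha Q_0$, it derives the recursion $b_{R,S}^Q=\bigl(T_R|_Q\,T_S|_Q-\sum_{J\subsetneq Q}b_{R,S}^J\,T_J|_Q\bigr)/T_Q|_Q$ by restricting to the top fixed point, and then uses the first-letter transformation law $T_S|_{s_\alpha R_0}=\frac{\alpha^{[s_\alpha\in S]}}{1+\alpha}s_\alpha(T_{S_0}|_{R_0})$ (Corollary \ref{cor:reflections}) to exhibit $b_{R,S}^Q$ as the outermost operator applied to $b_{R_0,S_0}^{Q_0}$, which is exactly the form the inductive hypothesis supplies. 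You would need either to switch to this first-letter peeling or to formulate and prove a substantially stronger inductive statement.
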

\begin{rem}
\begin{enumerate}
\item 
It is easy to check that when $v=w$, the above formula is the same as the one in Equation \eqref{equ:restriction}.
\item 
In \cite{GK19}, the authors also obtain formulae for the K theory structure constants. The analogue of the CSM/SSM classes in K theory are the motivic Chern classes and Segre motivic Chern classes, see \cite{BSY, AMSS19b}. It is interesting to generalize the above formula for the Segre motivic Chern classes.
\end{enumerate}
\end{rem}
\begin{examp}
Let $G=\SL(3,\bbC)$ with simple roots $\alpha_1,\alpha_2$. Let $\alpha_3:=\alpha_1+\alpha_2$ be the non-simple root. Consider the case $w=s_1s_2s_1$, $u=s_1$ and $v=s_2$. Let $Q=s_1s_2s_1$. Then $R$ can be $s_1--$ or $--s_1$, while $S=-s_2-$. The $c_{u,v}^w$ is computed as follows
\begin{align*}
c_{u,v}^w=&\left(\frac{1}{1+\alpha_1}s_1\frac{1}{1+\alpha_2}s_2\frac{1}{1+\alpha_1}s_1(-T^\vee_1)\right)\cdot 1\\
&+\left(\frac{1}{1+\alpha_1}s_1(-T^\vee_1)\frac{1}{1+\alpha_2}s_2\frac{1}{1+\alpha_1}s_1\right)\cdot 1\\
=&-\frac{1}{(1+\alpha_1)(1+\alpha_2)(1+\alpha_3)}\\
&-\frac{1}{(1+\alpha_1)(1+\alpha_2)(1+\alpha_3)}\\
=&-\frac{2}{(1+\alpha_1)(1+\alpha_2)(1+\alpha_3)}.
\end{align*} 
\end{examp}

\subsection{Structure constants for the CSM classes}
In this section, we give a formula for the structure constants for the CSM classes.

Define the structure constants $d_{u,v}^w$ by 
\begin{equation}\label{equ:structure2}
\csmT(Y(u)^\circ)\cup \csmT(Y(v)^\circ)=\sum_w d_{u,v}^w\csmT(Y(w)^\circ)\in H_T^*(G/B)_{loc},
\end{equation}
where $d_{u,v}^w\in \Frac H_T^*(\pt)$. In particular, $d_{u,w}^w=\csmT(Y(u)^\circ)|_w$.

Define a $\bbC$-linear map $\varphi:H_T^*(G/B)\rightarrow H_T^*(G/B)$ by $\varphi(\gamma)=(-1)^i\gamma$ for any $\gamma\in H^{2i}_T(G/B)$. This is an algebra automorphism of $H_T^*(G/B)$. By definition, for any degree $i$ homogeneous polynomial $f\in H_T^{2i}(\pt)$ on $\ft$, $\varphi(f)=(-1)^if$. Extend $\varphi$ to $\Frac H_T^*(\pt)$ by $\varphi(\frac{f}{g})=\frac{\varphi(f)}{\varphi(g)}$ for any $f,g\in H_T^*(\pt)$. Thus, we can also extend $\varphi$ to $H_T^*(G/B)_{loc}$. 

Then the structure constants $d_{u,v}^w$ is related to the $c_{u,v}^w$ in Equation \eqref{equ:structure1} as follows.
\begin{thm}\label{thm:csm}
For any $u,v,w\in W$, we have
\[d_{u,v}^w=(-1)^{\ell(u)+\ell(v)-\ell(w)}\varphi(c_{u,v}^w)\prod_{\alpha>0}(1-\alpha)\in \Frac H_T^*(\pt).\]
In particular, in the non-equivariant case, $d_{u,v}^w=(-1)^{\ell(u)+\ell(v)-\ell(w)}c_{u,v}^w$.
\end{thm}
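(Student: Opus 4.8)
The plan is to pass between the CSM and SSM structure constants using the duality pairing \eqref{equ:duality} together with the relation \eqref{equ:ssmTvee} between $\ssmT(Y(w)^\circ)$ and $\csmTv(Y(w)^\circ)$, and to track how the automorphism $\varphi$ and the factor $\prod_{\alpha>0}(1-\alpha)$ are introduced. First I would express both sets of structure constants as pairings against CSM classes of the opposite Schubert cells: by \eqref{equ:duality}, $c_{u,v}^w=\langle\ssmT(Y(u)^\circ)\cup\ssmT(Y(v)^\circ),\csmT(X(w)^\circ)\rangle$, and I want an analogous identity for $d_{u,v}^w$. Since the CSM classes and SSM classes are dual, multiplying \eqref{equ:structure2} by $\ssmT(Y(w')^\circ)$ and pairing gives $d_{u,v}^w=\langle\csmT(Y(u)^\circ)\cup\csmT(Y(v)^\circ),\xi_w\rangle$ where $\xi_w$ is the basis dual to $\{\csmT(Y(w)^\circ)\}$; the task is to identify $\xi_w$ in terms of SSM classes of $X$-cells, i.e. to establish a second duality statement $\langle\csmT(Y(w)^\circ),\ssmT(X(u)^\circ)\rangle=\delta_{w,u}$ (the "$X\leftrightarrow Y$ swapped" version of \eqref{equ:duality}), which should follow from \eqref{equ:duality} by applying the longest-element symmetry $w\mapsto w_0w$ exchanging $X$- and $Y$-cells, or directly from \cite[Theorem 9.4]{AMSS17}.

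Next I would compare the two pairings term by term. Using \eqref{equ:ssmTvee}, $\ssmT(Y(u)^\circ)=\frac{1}{\prod_{\alpha>0}(1+\alpha)}\csmTv(Y(u)^\circ)$, and there is a parallel description of $\ssmT(X(u)^\circ)$ in terms of a $\calT$-operator applied to a point class. The key computational input is the behavior of the operators $\calT_i,\calT_i^\vee$ and the Chern-class multiplications under $\varphi$: since $\varphi$ is the algebra automorphism scaling $H^{2i}$ by $(-1)^i$, one checks $\varphi\circ c_1^T(\calL_\lambda)\circ\varphi=-c_1^T(\calL_\lambda)$, $\varphi\circ\partial_i\circ\varphi=-\partial_i$, hence $\varphi\circ s_i\circ\varphi=s_i$ but $\varphi\circ\calT_i\circ\varphi=-\partial_i-s_i=-\calT_i^\vee$ and likewise $\varphi\circ\calT_i^\vee\circ\varphi=-\calT_i$. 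Consequently $\varphi$ intertwines $\csmT(X(w)^\circ)=\calT_{w^{-1}}([X(\id)])$ with (a sign times) $\csmTv$-type classes. Conjugating the pairing $\langle-,-\rangle$ by $\varphi$ multiplies it by a sign depending on the total degree, and combining this with \eqref{equ:ssmTvee} converts $\langle\ssmT(Y(u)^\circ)\cup\ssmT(Y(v)^\circ),\csmT(X(w)^\circ)\rangle$ into an expression for $\langle\csmT(Y(u)^\circ)\cup\csmT(Y(v)^\circ),\ssmT(X(w)^\circ)\rangle$ up to the factor $\prod_{\alpha>0}(1+\alpha)^{\pm1}$ and a power of $-1$.

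Finally I would bookkeep the signs and the normalizing product carefully. The homological degree of $\csmT(Y(u)^\circ)\cup\csmT(Y(v)^\circ)$ relative to $\csmT(Y(w)^\circ)$ accounts for the exponent $\ell(u)+\ell(v)-\ell(w)$: in the non-equivariant limit $\csmT(Y(u)^\circ)=[Y(u)]+(\text{higher})$, so the product lands in codimension $\ell(u)+\ell(v)$ and its $[Y(w)]$-component is a scalar of cohomological degree $2(\ell(u)+\ell(v)-\ell(w))$, on which $\varphi$ acts by $(-1)^{\ell(u)+\ell(v)-\ell(w)}$; applying $\varphi$ to the equivariant identity then produces exactly the stated sign together with $\varphi(c_{u,v}^w)$. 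The surviving mismatch between $\prod_{\alpha>0}(1+\alpha)$ from \eqref{equ:ssmTvee} on the $Y$-side and the $X$-side normalization is $\varphi\big(\prod_{\alpha>0}(1+\alpha)\big)=\prod_{\alpha>0}(1-\alpha)$, which is the remaining factor. The main obstacle is precisely this last step — getting every sign and every factor of $\prod_{\alpha>0}(1\pm\alpha)$ to cancel consistently, since $\varphi$ does not commute with the pairing and the classes live in the localized ring where degree arguments must be applied to each homogeneous piece separately; I would organize it by first proving the clean operator identity $\varphi\calT_i\varphi=-\calT_i^\vee$ and the dual-pairing transformation law $\langle\varphi\gamma_1,\varphi\gamma_2\rangle=\varphi\langle\gamma_1,\gamma_2\rangle$ in a lemma, after which the computation is forced. (The author credits L. Mihalcea with this argument, so I expect the intended proof follows essentially these lines.)
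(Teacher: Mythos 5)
Your central mechanism is the right one and largely coincides with the paper's: the automorphism $\varphi$ exchanges CSM and SSM classes of the $Y$-cells up to a sign and a factor of $\prod_{\alpha>0}(1\pm\alpha)$. Indeed, your operator identity $\varphi\circ\calT_i^\vee\circ\varphi=-\calT_i$ (which is correct, since $\varphi\partial_i\varphi=-\partial_i$ and $\varphi s_i\varphi=s_i$) is a legitimate alternative derivation of the key input: applied to $\csmTv(Y(w)^\circ)=\calT^\vee_{w^{-1}w_0}([Y(w_0)])$ it recovers $\varphi\bigl(\csmTv(Y(w)^\circ)\bigr)=(-1)^{\ell(w)}\csmT(Y(w)^\circ)$, which together with \eqref{equ:ssmTvee} gives $\varphi(\ssmT(Y(w)^\circ))=\frac{(-1)^{\ell(w)}}{\prod_{\alpha>0}(1-\alpha)}\csmT(Y(w)^\circ)$; the paper obtains this same identity by quoting \cite[Proposition 5.4]{AMSS17}. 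Where you diverge is the endgame, and here your route is an unnecessary detour. The paper never touches the Poincar\'e pairing: since $\varphi$ is a ring automorphism, one simply applies it to the defining relation \eqref{equ:structure1}, substitutes the identity above for each of the three SSM classes, and reads off the coefficients against \eqref{equ:structure2} --- the factors $\prod(1-\alpha)$ and the sign $(-1)^{\ell(u)+\ell(v)-\ell(w)}$ drop out in one line. Your plan instead routes through the duality \eqref{equ:duality}, a swapped duality $\langle\csmT(Y(w)^\circ),\ssmT(X(u)^\circ)\rangle=\delta_{w,u}$, and the $X$-cell analogues of \eqref{equ:ssmTvee}; all of these can be supplied, but they are extra identities you would still have to prove, and the codimension conventions for $X$-cells versus $Y$-cells make the signs there genuinely fiddly.

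Two concrete issues to fix if you pursue your route. First, your proposed lemma $\langle\varphi\gamma_1,\varphi\gamma_2\rangle=\varphi\langle\gamma_1,\gamma_2\rangle$ is false as stated: the pairing is a pushforward to a point, which lowers degree by $2\dim G/B$, so the correct law is $\langle\varphi\gamma_1,\varphi\gamma_2\rangle=(-1)^{\dim G/B}\varphi\bigl(\langle\gamma_1,\gamma_2\rangle\bigr)$; this sign must then cancel against the one arising from $\varphi(\csmT(X(w)^\circ))$, whose codimension is $\dim G/B-\ell(w)$ rather than $\ell(w)$. Second, you explicitly leave the final cancellation of signs and $\prod(1\pm\alpha)$ factors as the "main obstacle" rather than carrying it out, and that is precisely the part your pairing-based setup makes hard. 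The fix is structural: drop the pairing entirely and apply $\varphi$ directly to \eqref{equ:structure1}, after which the bookkeeping is forced.
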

\begin{rem}
Using Equations \eqref{equ:csmlocalization}, \eqref{equ:restriction} and the equality \[\prod_{\alpha>0,w\alpha>0}(1-w\alpha)\prod_{\alpha>0,w\alpha<0}(1+w\alpha)=\prod_{\alpha>0}(1-\alpha),\] it is easy to check directly the Theorem when $v=w$.
\end{rem}
\begin{proof}\footnote{The author thanks L. Mihalcea for pointing out this proof.}
By \cite[Proposition 5.4]{AMSS17} (after setting $\hbar=1$),
\[\csmTv(Y(w)^\circ)=(-1)^{\ell(w)}\sum_k(-1)^{\dim G/B-k}\csmT(Y(w)^\circ)_k,\]
where $\csmT(Y(w)^\circ)_k\in H_{2k}^T(X)=H_T^{2\dim G/B-2k}(G/B)$ is the degree $2\dim G/B-2k$ component of $\csmT(Y(w)^\circ)$ defined by 
\[\csmT(Y(w)^\circ)=\sum_k \csmT(Y(w)^\circ)_k.\]
Therefore, applying $\varphi$ to Equation \eqref{equ:ssmTvee}, we get
\[\varphi(\ssmT(Y(w)^\circ))=\frac{(-1)^{\ell(w)}}{\prod_{\alpha>0} (1-\alpha)}\csmT(Y(w)^\circ).\]
Applying the automorphism $\varphi$ to Equation \eqref{equ:structure1} and comparing with Equation \eqref{equ:structure2}, we get
\[d_{u,v}^w=(-1)^{\ell(u)+\ell(v)-\ell(w)}\varphi(c_{u,v}^w)\prod_{\alpha>0}(1-\alpha).\]
\end{proof}

\subsection{Structure constants for the stable basis for $T^*(G/B)$}
\label{sec:stable}
In this section, we give a formula for the structure constants for the stable basis in the cotangent bundle of the complete flag variety $G/B$. 

Let us first recall the definition of the stable basis. The torus $\bbC^*$ acts on $T^*(G/B)$ by scaling the cotangent fiber by a character of $-\hbar$, and it acts trivially on the zero section $G/B$. The fixed points $(T^*(G/B))^{T\times \bbC^*}$ are in one-to-one correspondence with the Weyl group, and they all lie in the zero section. For any $w\in W$, $(wB,0)$ is the corresponding fixed point. For any $\gamma\in H_{T\times \bbC^*}^*(T^*(G/B))$, let $\gamma|_w\in H_{T\times \bbC^*}^*(\pt)=\bbC[\ft][\hbar]$ denote the pullback of $\gamma$ to the fixed point $(wB,0)$. On $ H_{T\times \bbC^*}^*(T^*(G/B))$, there is a non-degenerate Poincar\'e pairing $\langle-,-\rangle_{T^*(G/B)}$ defined by localization as follows
\[\langle\gamma_1,\gamma_2\rangle_{T^*(G/B)}=\sum_w\frac{\gamma_1|_w\gamma_2|_w}{\prod_{\alpha>0}(-w\alpha)(w\alpha-\hbar)}\in \Frac H_{T\times \bbC^*}^*(\pt),\]
where $\gamma_1,\gamma_2\in H_{T\times \bbC^*}^*(T^*(G/B))$. For any $w\in W$, let $T_{Y(w)^\circ}^*(G/B)$ denote the conormal bundle of the opposite Schubert cell $Y(w)^\circ$ inside $G/B$. Then the stable basis is given by
\begin{thm}[\cite{MO19, S17}]
There exist unique $T\times \bbC^*$-equivariant Lagrangian cycles $\{\stab_-(w)\,|\, w\in W\}$ in $T^*(G/B)$ which satisfy the following properties:
\begin{enumerate}
\item $supp (\stab_-(w))\subset \bigcup_{u\geq w} \overline{T_{Y(u)^\circ}^*(G/B)}$;
\item $\stab_-(w)|_w= \prod\limits_{\alpha>0,w\alpha>0}(w\alpha-\hbar)\prod\limits_{\alpha>0,w\alpha<0}w\alpha$;
\item $\stab_-(w)|_u$ is divisible by $\hbar$, for any $u>w$ in the Bruhat order.
\end{enumerate}
\end{thm}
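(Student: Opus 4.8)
\medskip
\noindent\textbf{Proof proposal.}
This is the Maulik--Okounkov stable envelope theorem \cite{MO19} for the symplectic resolution $T^*(G/B)$, made explicit in \cite{S17}; the plan is to fix the relevant chamber, dispose of uniqueness by a triangularity argument, and then construct the classes. First I would fix a generic antidominant cocharacter $\sigma\colon\bbC^*\to T$, so that the $\sigma$-fixed locus coincides with $(T^*(G/B))^{T\times\bbC^*}=\{(wB,0)\mid w\in W\}$, and recall the standard facts that the attracting set of $(wB,0)$ for the induced flow is the conormal variety $T_{Y(w)^\circ}^*(G/B)$, that the closure of its \emph{full} attracting set is $\bigcup_{u\geq w}\overline{T_{Y(u)^\circ}^*(G/B)}$, and that the partial order induced on the fixed points is the Bruhat order. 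With this dictionary, (1) is exactly the statement that $\stab_-(w)$ is supported on the full attracting set, (2) is the normalization along the diagonal, and (3) is the Maulik--Okounkov ``smallness'' bound specialized to $T^*(G/B)$.

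\emph{Uniqueness.} Given two solutions $\gamma_1,\gamma_2$ of (1)--(3), I would set $\delta=\gamma_1-\gamma_2$ and note that, since each $\stab_-(w)$ is the class of a $T\times\bbC^*$-equivariant Lagrangian cycle, it is homogeneous of cohomological degree $2\dim G/B$; as the irreducible components of $\bigcup_{u\geq w}\overline{T_{Y(u)^\circ}^*(G/B)}$ are precisely the equidimensional (middle-dimensional) subvarieties $\overline{T_{Y(u)^\circ}^*(G/B)}$, this forces $\delta=\sum_{u\geq w}c_u\,[\overline{T_{Y(u)^\circ}^*(G/B)}]$ with constants $c_u$. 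Restricting to fixed points, $[\overline{T_{Y(u)^\circ}^*(G/B)}]|_{u'}=0$ unless $u'\geq u$, while $[\overline{T_{Y(u)^\circ}^*(G/B)}]|_{u}=\pm\prod_{\alpha>0,u\alpha>0}(u\alpha-\hbar)\prod_{\alpha>0,u\alpha<0}u\alpha$ (the equivariant Euler class of the repelling normal bundle of $(uB,0)$), which is nonzero. Induction along the Bruhat order then finishes: $\delta|_w=0$ gives $c_w=0$; and for $u>w$, once the strictly smaller coefficients vanish, $\delta|_u=\pm c_u\prod_{\alpha>0,u\alpha>0}(u\alpha-\hbar)\prod_{\alpha>0,u\alpha<0}u\alpha$, which by (3) must be divisible by $\hbar$ --- impossible for a nonzero constant $c_u$ times that Euler class, since the latter's value at $\hbar=0$ is $\pm\prod_{\alpha>0}u\alpha\neq0$. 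Hence $\delta=0$.

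\emph{Existence.} I would offer the construction by either of two routes. The abstract one follows \cite{MO19}: with the antidominant chamber fixed above, $\stab_-(w)$ is the class of the Lagrangian cycle obtained by degenerating the graph of the contracting $\bbC^*$-flow on the full attracting set of $(wB,0)$ (equivalently, by deformation to the normal cone of the attracting set); one then checks that the support is as in (1), that the multiplicity along the main component $\overline{T_{Y(w)^\circ}^*(G/B)}$ equals $1$ so that (2) holds after restriction, and that the weight bookkeeping in the $\bbC^*$-direction forces (3). The hands-on route, as in \cite{S17}, is by induction on $\ell(w)$: take $\stab_-(w_0)$ to be the normalized class of the conormal fiber $T_{w_0B}^*(G/B)$ of the point $Y(w_0)^\circ=\{w_0B\}$, and for a reduced word apply the $\hbar$-twisted Demazure/Hecke-type correspondence operators on $H^*_{T\times\bbC^*}(T^*(G/B))$ (push--pull through the correspondences over $G/P_i$), verifying at each step that the support condition propagates, that the diagonal restriction transforms into the one prescribed by (2), and that divisibility by $\hbar$ off the diagonal is preserved.

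\emph{Main obstacle.} The support statement and uniqueness are essentially formal once the attracting sets and the degree bound are in hand; the real content is existence, and within it the normalization (2). In the abstract approach the difficulty is proving the degeneration produces a cycle of multiplicity one along the main component with the prescribed equivariant Euler class after restriction to $(wB,0)$; in the inductive approach it is the localization/residue computation showing that the Hecke-type operators carry the diagonal restriction for $w$ to that for $ws_i$, together with checking that these operators respect (1) and (3). I expect that restriction computation to be the crux.
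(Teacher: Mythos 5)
The paper does not prove this theorem: it is imported verbatim from \cite{MO19, S17} (the general stable-envelope existence/uniqueness theorem of Maulik--Okounkov, specialized to $T^*(G/B)$, with the explicit normalization worked out by Su), so there is no in-paper argument to compare against. Your reconstruction is faithful to the standard proof in those references. The dictionary you set up (antidominant cocharacter, attracting set of $(wB,0)$ equal to $T^*_{Y(w)^\circ}(G/B)$, closure order equal to Bruhat order) is correct, and your uniqueness argument is essentially complete: since the difference $\delta$ of two solutions is a Lagrangian cycle supported on the pure-dimensional variety $\bigcup_{u\geq w}\overline{T^*_{Y(u)^\circ}(G/B)}$, it is an integral combination of the classes of the irreducible components, and the induction along the Bruhat order using axiom (2) at $u=w$ and axiom (3) plus $\hbar\nmid\prod_{\alpha>0}u\alpha$ at $u>w$ kills all coefficients. (This packaging via top-degree Borel--Moore homology of the support is a mild variant of Maulik--Okounkov's fixed-point induction, and is if anything cleaner in this Lagrangian-cycle formulation.) The only thin part is existence: both routes you name are the right ones, but each defers the substantive verification --- for the degeneration construction, that the resulting cycle has multiplicity one along $\overline{T^*_{Y(w)^\circ}(G/B)}$ so that (2) holds, and for the inductive construction, that the $\hbar$-deformed Demazure--Lusztig correspondences preserve axioms (1) and (3) and transform the diagonal restriction correctly (this last computation is precisely the content of \cite{S17}). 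Since the statement is a cited black box in this paper, that level of deferral is appropriate, but if you intended a self-contained proof you would still owe those checks.
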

From the first and second properties, it follows that $\{\stab_-(w)\,|\, w\in W\}$ is a basis, which is called the \textit{stable basis}, for the localized equivariant cohomology $H_{T\times \bbC^*}^*(T^*(G/B))_{\loc}$. Here the negative sign $-$ denotes the anti-dominant Weyl chamber. For the dominant Weyl chamber $+$, there is also a stable basis $\{\stab_+(w)\,|\, w\in W\}$, where $\stab_+(w)$ is supported on 
$\bigcup_{u\leq w} \overline{T_{X(u)^\circ}^*(G/B)}$. Moreover, these two bases are dual to each other, i.e., for any $w,u\in W$, $\langle\stab_-(w),\stab_+(u)\rangle_{T^*(G/B)}=(-1)^{\dim G/B}\delta_{w,u}$, see \cite[Remark 2.2(3)]{S17}.

Define the structure constants $e_{u,v}^w$ by the formula
\begin{equation}\label{equ:stablestructre}
\stab_-(u)\cup \stab_-(v)=\sum_w e_{u,v}^w\stab_-(w).
\end{equation}
By the duality, 
\[e_{u,v}^w=(-1)^{\dim G/B}\langle\stab_-(u)\cup \stab_-(v),\stab_+(w)\rangle_{T^*(G/B)}\in \Frac H_{T\times\bbC^*}^*(\pt).\]
By the second property of the stable basis, the intersection of the support for $\stab_-(u)$ and $\stab_+(w)$ is proper. Therefore, $e_{u,v}^w=\langle\stab_-(u)\cup \stab_-(v),\stab_+(w)\rangle_{T^*(G/B)}$ lies in the non-localized cohomology ring $H_{T\times\bbC^*}^*(\pt)$. Since the stable basis elements are given by Lagrangian cycles, each of them lives in the cohomology degree $2\dim G/B$. Thus, a degree count shows $e_{u,v}^w\in H_{T\times\bbC^*}^*(\pt)=\bbC[\ft][\hbar]$ is a homogeneous polynomial of degree $2\dim X$. Therefore, we can recover the homogeneous polynomial $e_{u,v}^w$ from its specialization $e_{u,v}^w|_{\hbar=1}\in H_T^*(\pt)$, which will be related to the structure constants $d_{u,v}^w$ in Equation \eqref{equ:structure2}.

Let $\iota:G/B\hookrightarrow T^*(G/B)$ denote the inclusion of the zero section. By \cite[Proposition 6.9(ii)]{AMSS17}, we have
\begin{equation}\label{equ:stabecsm}
\iota^*(\stab_-(w))|_{\hbar=1}=(-1)^{\dim G/B}\csmT(Y(w)^\circ).
\end{equation}
Therefore, applying $\iota^*$ to Equation \eqref{equ:stablestructre}, letting $\hbar=1$ and comparing with Equation \eqref{equ:structure2}, we get
\begin{thm}\label{thm:stablecstru}
For any $u,v,w\in W$, we have
\[e_{u,v}^w|_{\hbar=1}=(-1)^{\dim G/B}d_{u,v}^w.\]
\end{thm}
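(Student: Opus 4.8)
The plan is to carry out exactly the reduction indicated after the statement: pull back the multiplication relation \eqref{equ:stablestructre} along the zero section $\iota\colon G/B\hookrightarrow T^*(G/B)$, specialize $\hbar=1$, and identify the resulting identity with \eqref{equ:structure2} by means of \eqref{equ:stabecsm}.

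First I would record why this is legitimate. The zero section $\iota$ is $T\times\bbC^*$-equivariant (with $\bbC^*$ acting trivially on the base), and since $T^*(G/B)$ equivariantly deformation retracts onto it, $\iota^*$ is a ring isomorphism
\[
\iota^*\colon H_{T\times\bbC^*}^*(T^*(G/B))\xrightarrow{\ \sim\ }H_{T\times\bbC^*}^*(G/B)=H_T^*(G/B)[\hbar];
\]
in particular $\iota^*$ is multiplicative and, being $\bbC[\hbar]$-linear, commutes with the specialization $\hbar=1$. By the degree count already made in Section~\ref{sec:stable}, each $e_{u,v}^w$ is a homogeneous element of $\bbC[\ft][\hbar]$, so $e_{u,v}^w|_{\hbar=1}\in H_T^*(\pt)$ is well defined and \eqref{equ:stablestructre} in fact holds with these polynomial coefficients. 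Finally, recall from Section~\ref{sec:CSMSSM} that $\{\csmT(Y(w)^\circ)\mid w\in W\}$ is a basis of $H_T^*(G/B)_{\loc}$, so expansion coefficients in it are unique.

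Next I would apply $\iota^*$ to \eqref{equ:stablestructre} and then set $\hbar=1$. Using \eqref{equ:stabecsm} together with multiplicativity of $\iota^*$, the left-hand side becomes
\[
\bigl((-1)^{\dim G/B}\csmT(Y(u)^\circ)\bigr)\cup\bigl((-1)^{\dim G/B}\csmT(Y(v)^\circ)\bigr)=\csmT(Y(u)^\circ)\cup\csmT(Y(v)^\circ),
\]
since $(-1)^{2\dim G/B}=1$, while the right-hand side becomes $\sum_w e_{u,v}^w|_{\hbar=1}\,(-1)^{\dim G/B}\csmT(Y(w)^\circ)$. Comparing with the defining relation \eqref{equ:structure2} for $d_{u,v}^w$ and invoking uniqueness of coefficients in the basis $\{\csmT(Y(w)^\circ)\}$, I read off $d_{u,v}^w=(-1)^{\dim G/B}e_{u,v}^w|_{\hbar=1}$, which is the claim after multiplying through by $(-1)^{\dim G/B}$.

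There is no genuinely hard step; the only care needed is bookkeeping. The subtlest points are that one must apply $\iota^*$ \emph{before} specializing $\hbar=1$ — Equation \eqref{equ:stabecsm} controls $\iota^*\stab_-(w)|_{\hbar=1}$, not $\iota^*\stab_-(w)$ itself — and that the sign is exactly right: \eqref{equ:stabecsm} contributes a factor $(-1)^{\dim G/B}$ to each of the three stable basis classes, two of which cancel against each other on the left, leaving precisely one such factor relating $d_{u,v}^w$ and $e_{u,v}^w|_{\hbar=1}$.
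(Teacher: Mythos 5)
Your proposal is correct and follows exactly the paper's argument: apply $\iota^*$ to Equation \eqref{equ:stablestructre}, use Equation \eqref{equ:stabecsm} at $\hbar=1$, and compare coefficients with Equation \eqref{equ:structure2} in the basis $\{\csmT(Y(w)^\circ)\}$. The extra bookkeeping you supply (multiplicativity of $\iota^*$, compatibility with the specialization $\hbar=1$, and the cancellation of two of the three signs) is exactly what the paper leaves implicit, and your sign matches the stated formula.
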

However, using the current method, we can not obtain similar results for the cotangent bundle of partial flag varieties. The case of $T^*\Gr(k,n)$ is studied in \cite{C17}.

\section{Bott--Samelson varieties}\label{sec:BS}
The proof of the main Theorem \ref{thm:structure1} uses the Bott--Samelson variety. In this section, we introduce this variety and its basic properties, and reduce the main theorem to a formula for the structure constants for some basis in the equivariant cohomology of the Bott--Samelson variety. In the remaining parts of this note, we fix a word $Q=s_{\alpha_{1}}s_{\alpha_{2}}\cdots s_{\alpha_{l}}$ (not necessarily reduced) in simple reflections.

\subsection{Definition and properties}
Recall that the Bott--Samelson variety associated to the word $Q$ is 
\[\BS^Q:=P_{\alpha_{1}}\times_B P_{\alpha_{2}}\times_B\cdots \times_B P_{\alpha_{l}}/B,\]
where $P_{\alpha_{k}}$ are the minimal parabolic subgroups corresponding to $\alpha_{k}$, and the quotient is by the equivalence relation given by $(g_1,g_2,\cdots g_l)\sim (g_1b_1,b_1^{-1}g_2b_2,\cdots, b_{l-1}^{-1}g_lb_l)$ for any $g_k\in P_{k}$ and $b_k\in B$. Let $[g_1,g_2,\cdots g_l]$ denote the resulting equivalence class in $\BS^Q$.

The maximal torus $T$ acts by left multiplication on $\BS^Q$, and there are $2^{\#Q}$ fixed points. To be more specific, the set of sequences $\{(g_1,g_2,\cdots g_l)\in P_{\alpha_{1}}\times P_{\alpha_{2}}\times \cdots \times P_{\alpha_{k}}|g_{k}\in \{1,s_{k}\}\}$ map bijectively to the fixed points set $(\BS^Q)^T$. Thus, we can index the fixed points by subwords $J\subset\{s_{1},s_{2},\cdots s_{l}\}$ of $Q$.  For a subword $J\subset Q$, the torus weights of the tangent space $T_J\BS^Q$ is (\cite[Lemma 1(iii)]{M07})
\[\{(\prod_{j\in J,j\leq i}s_j) (-\alpha_i)|1\leq i\leq l\}.\]

The Bott--Samelson variety $\BS^Q$ has a cell decomposition as follows, see \cite[Section 3.2]{M07}. For any subword $J\subset Q$, we have a submanifold
\[\BS^J:=\{[g_1,g_2,\cdots g_l]\in \BS^Q| g_k\in B \textit{ if } k\notin J\}\subset \BS^Q,\]
which can be identified with the Bott--Samelson variety for the word $J$. It has an open dense cell 
\[\BS^J_\circ:=\BS^J\setminus \cup_{S\subsetneq J}\BS^S,\]
which contains a unique fixed point corresponding to the subword $J\subset Q$. From definition, it is easy to see that
\begin{equation}\label{equ:cell}
\BS^J=\sqcup_{S\subset J}\BS_\circ^S.
\end{equation}

The equivariant cohomology $H_T^*(\BS^Q)$ has a natural basis given by the fundamental classes of the sub Bott--Samelson varieties $\{[\BS^J]|J\subset Q\}$. Let $\langle-,-\rangle_{\BS^Q}$ denote the non-degenerate Poincar\'e pairing on $H_T^*(\BS^Q)$. By the  Atiyah--Bott localization theorem and the above description of the weights at torus fixed point, we have the following formula for the pairing.
\begin{lem}
For any $\gamma_1,\gamma_2\in H_T^*(\BS^Q)$, we have
\[\langle\gamma_1,\gamma_2\rangle_{\BS^Q}=\sum_{J\subset Q}\frac{\gamma_1|_J \gamma_2|_J}{\prod_{i\in Q}(\prod_{j\in J,j\leq i}s_j) (-\alpha_i)},\]
where $\gamma_i|_J$ denotes the restriction of $\gamma_i$ to the fixed point $J$.
\end{lem}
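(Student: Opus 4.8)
The plan is to recognize the Lemma as a direct instance of the Atiyah--Bott--Berline--Vergne localization theorem. First I would unwind the definition of the Poincar\'e pairing: since $\BS^Q$ is smooth and projective, $\langle\gamma_1,\gamma_2\rangle_{\BS^Q}=\int_{\BS^Q}\gamma_1\cup\gamma_2$, where $\int_{\BS^Q}$ denotes the equivariant Gysin pushforward along the structure map $\BS^Q\to\pt$. The variety $\BS^Q$ carries a $T$-action whose fixed locus consists of the isolated points indexed by the subwords $J\subset Q$, as recalled above, so the localization theorem applies and yields, for every class $\eta\in H_T^*(\BS^Q)$,
\[\int_{\BS^Q}\eta=\sum_{J\subset Q}\frac{\eta|_J}{e_T(T_J\BS^Q)}\in\Frac H_T^*(\pt),\]
where $e_T(T_J\BS^Q)$ is the equivariant Euler class of the tangent space at the fixed point $J$, i.e. the product of its $T$-weights.

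Next I would specialize to $\eta=\gamma_1\cup\gamma_2$. Since restriction to a fixed point is induced by the inclusion of a point and is therefore a ring homomorphism $H_T^*(\BS^Q)\to H_T^*(\pt)$, we have $(\gamma_1\cup\gamma_2)|_J=\gamma_1|_J\,\gamma_2|_J$. It then remains only to identify the Euler class: by the description of the tangent weights of $T_J\BS^Q$ cited from \cite[Lemma 1(iii)]{M07} immediately before the statement, these weights are exactly $\{(\prod_{j\in J,\,j\leq i}s_j)(-\alpha_i)\mid 1\leq i\leq l\}$, so
\[e_T(T_J\BS^Q)=\prod_{i\in Q}\Big(\prod_{j\in J,\,j\leq i}s_j\Big)(-\alpha_i).\]
Substituting these two facts into the localization formula reproduces the asserted expression verbatim.

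I expect no genuine obstacle here; the Lemma is bookkeeping around a standard tool. The only points deserving a sentence of care are: first, checking that the conventions are used consistently, in particular that the ordering inside $\prod_{j\in J,\,j\leq i}s_j$ (which affects the individual weights but not their product, hence not the Euler class) matches the indexing of fixed points by subwords fixed earlier in this section; and second, observing that although the right-hand side a priori lives in the fraction field $\Frac H_T^*(\pt)$, its equality with the polynomial class $\langle\gamma_1,\gamma_2\rangle_{\BS^Q}\in H_T^*(\pt)$ forces the sum to be polynomial — a fact that is automatic from localization on a projective variety and needs no separate argument.
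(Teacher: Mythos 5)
Your proof is correct and is exactly the argument the paper intends: the Lemma is stated as an immediate consequence of Atiyah--Bott localization over the isolated fixed points $J\subset Q$, with the Euler class $e_T(T_J\BS^Q)$ read off from the tangent-weight description quoted from \cite[Lemma 1(iii)]{M07}. The only quibble is your parenthetical claim that reordering the reflections in $\prod_{j\in J,\,j\leq i}s_j$ would not change the Euler class --- changing the Weyl group element can change the resulting weight multiset, so one should simply use the left-to-right convention fixed by the paper rather than argue the product is order-independent; this does not affect the validity of the proof.
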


Let $\pi:\BS^Q\rightarrow G/B$ be the natural map
\[\pi([g_1,g_2,\cdots g_l])=(\prod_ig_i)B/B.\]
The image is the Schubert variety $X(\tilde{\prod} Q)$, where $\tilde{\prod} Q\in W$ is the Demazure product, see \cite{GK19}. If $Q$ is reduced, then $\tilde{\prod} Q=\prod Q\in W$, $\BS^Q$ is a resolution of singularity for $X(\prod Q)$ and $\pi_*([\BS^Q])=[X(\prod Q)]$. Otherwise, $\pi_*([\BS^Q])=0$.

\subsection{CSM classes of cells and dual basis}
For each subword $J\subset Q$, we have the equivariant CSM classes $\csmT(\BS^J_\circ)\in H_T^*(\BS^Q)$. By the same reason as in Section \ref{sec:CSMSSM}, $\{\csmT(\BS^J_\circ)|J\subset Q\}$ is a basis for the localized equivariant cohomology $H_T^*(\BS^Q)_{\loc}$. Using the non-degenerate pairing $\langle-,-\rangle_{\BS^Q}$, we can get a dual basis, denoted by $\{T_J|J\subset Q\}$. I.e., for any two subwords $J,R\subset Q$, 
\begin{equation}\label{equ:duality2}
\langle\csmT(\BS^J_\circ),T_R\rangle_{\BS^Q}=\delta_{R,J}.
\end{equation}
Since the restriction $\csmT(\BS^J_\circ)|_S=0$ if $S\nsubseteq J$, $T_R|_S=0$ if $R\nsubseteq S$.

By the cell decomposition in Equation \eqref{equ:cell}, we get 
\[\csmT(\BS^J)=\sum_{S\subset J}\csmT(\BS^S_\circ).\]
Hence,
\begin{equation}\label{equ:closure}
\langle\csmT(\BS^J),T_S\rangle_{\BS^Q}=\left\{\begin{array}{cc}
1 &, \textit{ if } S\subset J\\
0 &, \textit{ otherwise }
\end{array}\right.
\end{equation}
Moreover, we have the following localization formula for $\csmT(\BS^J)$.
\begin{lem}\label{lem:cellrestriction}
For any $R\subset J$, we have
\[\csmT(\BS^J)|_R=\prod_{j\in J}\left(1+(\prod_{\substack{r\in R\\ r\leq j}}s_r)(-\alpha_j)\right)\prod_{q\in Q\setminus J}(\prod_{\substack{r\in R\\ r\leq q}}s_r)(-\alpha_q).\]
\end{lem}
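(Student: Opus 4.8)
The plan is to compute the restriction $\csmT(\BS^J)|_R$ by realizing $\BS^J$ as itself a Bott--Samelson variety (for the word $J$) sitting inside $\BS^Q$, and combining two pieces of information: the normal directions of $\BS^J$ in $\BS^Q$ at the fixed point $R$, and the CSM class of the \emph{whole} Bott--Samelson variety $\BS^J$ restricted to one of its own fixed points. First I would recall that, by Section \ref{sec:BS}, the tangent weights of $T_R\BS^Q$ are $\{(\prod_{j\in R,\, j\le i}s_j)(-\alpha_i)\mid i\in Q\}$, and that the sub--Bott--Samelson $\BS^J\subset\BS^Q$ is cut out, near $R$ (for $R\subset J$), by requiring the coordinates indexed by $q\in Q\setminus J$ to be trivial; hence the normal weights of $\BS^J$ in $\BS^Q$ at $R$ are exactly $\{(\prod_{r\in R,\, r\le q}s_r)(-\alpha_q)\mid q\in Q\setminus J\}$, and the tangent weights of $\BS^J$ at $R$ are $\{(\prod_{r\in R,\, r\le j}s_r)(-\alpha_j)\mid j\in J\}$. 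This immediately accounts for the factor $\prod_{q\in Q\setminus J}(\prod_{r\in R,\, r\le q}s_r)(-\alpha_q)$: since $\csmT(\BS^J)=(\iota_J)_*\big(\csmT_{\BS^J}(\BS^J)\big)$ where $\iota_J\colon\BS^J\hookrightarrow\BS^Q$ is the closed embedding, and pushforward under a closed $T$-equivariant embedding multiplies the restriction at $R$ by the equivariant Euler class of the normal bundle, i.e. by $\prod_{q\in Q\setminus J}(\prod_{r\in R,\, r\le q}s_r)(-\alpha_q)$.

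It then remains to show that the ``intrinsic'' restriction $\csmT_{\BS^J}(\BS^J)|_R = c^T\big(T\BS^J\big)\cap[\BS^J]\,\big|_R = \prod_{j\in J}\big(1+(\prod_{r\in R,\, r\le j}s_r)(-\alpha_j)\big)$. This is just the statement that for a smooth (here compact) $T$-variety $X$ the CSM class of $X$ itself is $c^T(TX)\cap[X]$, whose restriction to a fixed point $p$ is $\prod$ over the tangent weights $\chi$ at $p$ of $(1+\chi)$ — here I use that the degree-$2\dim$ part of $c^T(TX)\cap[X]$ restricts to the equivariant Euler class $e^T(T_pX)=\prod\chi$ and that the lower-degree parts assemble into $\prod(1+\chi)$ by the splitting principle / multiplicativity of the total Chern class. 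Plugging in the tangent weights of $\BS^J$ at $R$ computed above gives exactly $\prod_{j\in J}(1+(\prod_{r\in R,\, r\le j}s_r)(-\alpha_j))$.

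Putting the two factors together yields the claimed formula. The only point requiring a little care — and the step I expect to be the main (albeit minor) obstacle — is the bookkeeping of \emph{which} simple reflections appear in the partial products: one must check that the ordering $r\le j$ (resp. $r\le q$) refers to position in $Q$, that restricting from $\BS^Q$ to $\BS^J$ does not disturb these partial products (it doesn't, because deleting the letters of $Q\setminus J$ from a subword $R\subset J$ leaves $R$ unchanged, and the relevant products only involve $r\in R$), and that the normal/tangent weight splitting at $R$ is precisely along the $J$ versus $Q\setminus J$ partition of the index set. Once this indexing is pinned down, the argument is formal: equivariant self-intersection formula for a closed embedding, plus the defining normalization $\csmT(X)=c^T(TX)\cap[X]$ for smooth $X$ read off at a fixed point via Atiyah--Bott.
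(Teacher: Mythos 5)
Your proposal is correct and follows essentially the same route as the paper: write $\csmT(\BS^J)=\iota_*\bigl(c^T(T\BS^J)\cap[\BS^J]\bigr)$ by normalization and functoriality, then restrict to the fixed point $R$ to get $c^T(T_R\BS^J)\cdot e^T(N_{\BS^J/\BS^Q}|_R)$ and read off the two factors from the tangent/normal weight splitting along the $J$ versus $Q\setminus J$ partition. The extra bookkeeping you flag (that the partial products $\prod_{r\in R,\,r\le j}s_r$ are unaffected by passing from $Q$ to $J$ since $R\subset J$) is a worthwhile check the paper leaves implicit, but there is no substantive difference in approach.
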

\begin{proof}
Let $\iota$ denote the inclusion of $\BS^J$ into $\BS^Q$, which is a proper map. Then by the functoriality and normalization properties of the MacPherson transformation $c_*^T$, we get
\[\csmT(\BS^J)=\iota_{*}(\csmT(\BS^J)_J)=\iota_{*}(c^T(T(\BS^J)),\]
Here, $\csmT(\BS^J)_J$ denotes the CSM classes of $\BS^J$ in $H^*_T(\BS^J)$, which equals $c^T(T(\BS^J)$ by the normalization condition. Let $N_{\BS^J/\BS^Q}$ denote the normal bundle of $\BS^J$ inside $\BS^Q$. Then
\begin{align*}
\csmT(\BS^J)|_R&=c^T(T_R(\BS^J))e^T(N_{\BS^J/\BS^Q}|_R)\\
&=\prod_{j\in J}\left(1+(\prod_{\substack{r\in R\\ r\leq j}}s_r)(-\alpha_j)\right)\prod_{q\in Q\setminus J}(\prod_{\substack{r\in R\\ r\leq q}}s_r)(-\alpha_q),
\end{align*}
where $e^T(N_{\BS^J/\BS^Q}|_R)$ denotes the $T$-equivariant Euler class of the fiber at $R$ of the normal bundle $N_{\BS^J/\BS^Q}$.
\end{proof}

These CSM classes of cells in $\BS^Q$ and the dual classes are related to the CSM and SSM classes of Schubert cells by the following lemma.
\begin{lem}\label{lem:relations}
For any subword $J\subset Q$, we have
\[\pi_*(\csmT(\BS^J_\circ))=\csmT(X(\prod J)^\circ),\]
and 
\[\pi^*(\ssmT(Y(w)^\circ))=\sum_{J\subset Q, \prod J=w} T_J.\]
\end{lem}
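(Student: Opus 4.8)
The plan is to establish the two identities separately, each by a localization / duality argument that reduces the statement to the explicit restriction formulas already available.

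For the first identity, $\pi_*(\csmT(\BS^J_\circ))=\csmT(X(\prod J)^\circ)$, I would use functoriality of the equivariant MacPherson transformation $c_*^T$ applied to the proper map $\pi:\BS^Q\to G/B$. The key point is a comparison of constructible functions: I claim $\pi_*^T(\one_{\BS^J_\circ})=\one_{X(\prod J)^\circ}$ as $T$-invariant constructible functions on $G/B$. Since the restriction of $\pi_*^T$ to $\calF_{inv}^T$ is the ordinary pushforward, this amounts to the statement that for a generic point $p$ of $X(\prod J)^\circ$ the fiber $\pi^{-1}(p)\cap \BS^J_\circ$ has Euler characteristic $1$, and for $p$ outside $X(\prod J)^\circ$ the intersection is empty or Euler-characteristic-zero. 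When $J$ is reduced this follows from $\BS^J\to X(\prod J)$ being a birational resolution restricting to an isomorphism over the open cell; for non-reduced $J$ one uses the Demazure-product description of $\pi(\BS^J)$ together with the fact that the map from $\BS^J_\circ$ has contractible (affine-space) generic fibers over its image's open cell, so Euler characteristics still work out. Then $c_*^T$ applied to this equality of constructible functions, using $\pi_*^T\circ c_*^T = c_*^T\circ \pi_*^T$, gives the result. The main subtlety here is the non-reduced case, where $\BS^J_\circ$ is not a single cell in $G/B$ but maps with positive-dimensional fibers; I expect this to be the one place requiring genuine care.

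For the second identity, the cleanest route is duality. Both sides live in $H_T^*(\BS^Q)_{\loc}$, which has basis $\{\csmT(\BS^J_\circ)\}$ with dual basis $\{T_J\}$ under $\langle-,-\rangle_{\BS^Q}$. So it suffices to check that pairing $\pi^*(\ssmT(Y(w)^\circ))$ against each $\csmT(\BS^K_\circ)$ gives $[\prod K=w]$. By the projection formula, $\langle \pi^*(\ssmT(Y(w)^\circ)),\csmT(\BS^K_\circ)\rangle_{\BS^Q}=\langle \ssmT(Y(w)^\circ),\pi_*(\csmT(\BS^K_\circ))\rangle$; but here one must be careful, since the two Poincaré pairings are related by pushforward only after accounting for the difference between $[\BS^Q]$ and $[G/B]$ — more precisely, $\pi_*$ is adjoint to $\pi^*$ with respect to these pairings because $\pi$ is proper and $G/B$, $\BS^Q$ are smooth. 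Granting this, the right side becomes $\langle \ssmT(Y(w)^\circ),\csmT(X(\prod K)^\circ)\rangle$ by the first identity, which by the CSM–SSM duality \eqref{equ:duality} equals $\delta_{w,\prod K}=[\prod K=w]$. Summing the dual-basis expansion then yields $\pi^*(\ssmT(Y(w)^\circ))=\sum_{\prod J=w}T_J$.

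The step I expect to be the main obstacle is verifying the projection-formula compatibility of the two Poincaré pairings through $\pi^*/\pi_*$ in the localized setting — one should check that $\langle\pi^*\gamma,\delta\rangle_{\BS^Q}=\langle\gamma,\pi_*\delta\rangle_{G/B}$ holds with no correction factor, which follows from $\pi_*\pi^*=\mathrm{id}$ not being true in general but from the genuine adjunction $\int_{\BS^Q}\pi^*\gamma\cup\delta=\int_{G/B}\gamma\cup\pi_*\delta$ (valid since $\pi$ is proper and both spaces are smooth oriented). Once that is in place, everything else is bookkeeping with the already-established localization formulas and the duality \eqref{equ:duality}. I would also double-check the edge case where $Q$ is non-reduced: then for $w=\prod J$ with $J$ non-reduced, $\pi_*(\csmT(\BS^J_\circ))$ could a priori vanish, but the constructible-function computation above shows it does not, and the formula for $\pi^*(\ssmT(Y(w)^\circ))$ correctly picks up all subwords $J$ with $\prod J=w$, reduced or not.
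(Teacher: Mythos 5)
Your proof of the second identity is essentially the paper's: expand $\pi^*(\ssmT(Y(w)^\circ))$ in the dual basis $\{T_J\}$, apply the projection formula $\langle\pi^*\gamma,\delta\rangle_{\BS^Q}=\langle\gamma,\pi_*\delta\rangle$ (which does hold with no correction factor, since $\pi$ is proper, both varieties are smooth and projective, and both pairings are integration of the cup product), then use the first identity and the duality \eqref{equ:duality}. No issue there.

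The gap is in your treatment of the first identity for non-reduced $J$ (which is unavoidable: subwords of a reduced $Q$ need not be reduced). Your reduction to the constructible-function statement $\pi_*(\one_{\BS^J_\circ})=\one_{X(\prod J)^\circ}$ is legitimate, since both sides are $B$-invariant and the claim becomes a fiberwise Euler-characteristic computation, one cell at a time. But the justification you offer --- ``the map from $\BS^J_\circ$ has contractible (affine-space) generic fibers over its image's open cell'' --- is false, and if it were true it would prove the wrong statement. Take $J=Q=s_1s_1$ for $G=\SL(2,\bbC)$. The image of $\BS^J_\circ$ is all of $\bbP^1=X(s_1)$ (the Demazure product is $s_1$), and over a point $p$ of the open cell $X(s_1)^\circ$ the fiber $\pi^{-1}(p)\cap\BS^J_\circ$ is $\bbP^1$ minus two points, with $\chi=0$; the unique fiber with $\chi=1$ lies over the base point, i.e.\ over $X(\prod J)^\circ=X(e)^\circ$. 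So in the non-reduced case the support of $\pi_*(\one_{\BS^J_\circ})$ is governed by the ordinary product $\prod J$, not by the open cell of the image (the Demazure product), and your sketch conflates the two. A correct argument must show that the fibers over $X(\prod J)^\circ$ have $\chi=1$ while those over every other cell have $\chi=0$; the paper does this inductively, peeling off one letter of $J$ at a time via \cite[Lemma 3.1 and Theorem 3.3]{AM16} to obtain $\pi_*(\csmT(\BS^J_\circ))=\calT_{i_k}\circ\cdots\circ\calT_{i_1}([X(\id)])$, and then using that the $\calT_i$ satisfy the Weyl-group relations (in particular $\calT_i^2=\id$) so that this composite equals $\calT_{(\prod J)^{-1}}([X(\id)])=\csmT(X(\prod J)^\circ)$ even for non-reduced $J$. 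Some such one-letter-at-a-time analysis is needed; the fiber structure of $\pi|_{\BS^J_\circ}$ over a fixed cell is genuinely intricate for long non-reduced words and cannot be dispatched by a single genericity remark.
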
 
\begin{rem}\label{rem:k}
\begin{enumerate}
\item 
The CSM classes behaves well with respect to proper pushforward, while the SSM classes behaves well with respect to pullback. If we switch the position between the CSM and SSM classes, such formulae would not hold. This is the reason why the method of Goldin and Knutson \cite{GK19} can only be applied to the SSM classes, not the CSM classes.
\item 
These formulae can be generalized to equivariant K theory, with the CSM (resp. SSM) classes replaced by the motivic Chern (resp. Segre motivic Chern) classes, see \cite{AMSS19b,MW}. However, since the corresponding Hecke operators in the equivariant K theory satisfy the quadratic relation $(T_s+1)(T_s-q)=0$ instead of the simple relation $T_s^2=1$, the formulae would be much more complicated. For example, there will many terms in first pushforward formula. This prevents the author from computing the structure constants for the Segre motivic Chern classes for the Schubert cells.
\end{enumerate}
\end{rem}
\begin{proof}
Let $J=s_{\alpha_{i_1}}s_{\alpha_{i_2}}\cdots s_{\alpha_{i_k}}\subset Q$.  
By \cite[Lemma 3.1 and Theorem 3.3]{AM16}, we get 
\[\pi_*(\csmT(\BS^J_\circ))=\calT_{i_k}\Bigg(\pi_*\bigg(\csmT(\BS^{s_{\alpha_{i_1}}s_{\alpha_{i_2}}\cdots s_{\alpha_{i_{k-1}}}}_\circ)\bigg)\Bigg).\]
Hence,
\[\pi_*(\csmT(\BS^J_\circ))=\calT_{i_k}\circ\cdots \circ\calT_{i_2}\circ\calT_{i_1}([X(\id)])=\calT_{(\prod J)^{-1}}([X(\id)]),\]
where in the second equality, we used the fact that the operators $\calT_i$'s defined in Section \ref{sec:hecke} satisfy the usual Weyl group relation, see \cite[Proposition 4.1]{AM16}. 
Then the first statement follows from Equation \eqref{equ:csmhecke}.

For the second one, we have
\begin{align*}
\pi^*(\ssmT(Y(w)^\circ))&=\sum_{J\subset Q}\langle\pi^*(\ssmT(Y(w)^\circ)),\csmT(\BS^J_\circ)\rangle_{\BS^Q}T_J\\
&=\sum_{J\subset Q}\langle \ssmT(Y(w)^\circ),\pi_*(\csmT(\BS^J_\circ))\rangle T_J\\
&=\sum_{J\subset Q}\langle \ssmT(Y(w)^\circ),\csmT(X(\prod J)^\circ)\rangle T_J\\
&=\sum_{J\subset Q, \prod J=w} T_J.
\end{align*}
Here the first and the last equalities follows from Equations \eqref{equ:duality2} and \eqref{equ:duality}, the second one follows from the projection formula, and the third one follows from the first equality in this lemma.
\end{proof}

\subsection{Structure constants for the dual basis}
Define the structure constants for the dual basis $\{T_J|J\subset Q\}$ in $H_T^*(\BS^Q)_{\loc}$ by 
\begin{equation}\label{equ:defstructure2}
T_R T_S=\sum_{J}b_{R,S}^J T_J\in H_T^*(\BS^Q)_{\loc},
\end{equation}
where $b_{R,S}^J\in \Frac H_T^*(\pt)$ and $R,S\subset J$. Taking the coefficients of $T_R$ on both sides, we get $T_S|_R=b_{R,S}^R$, which is given in Proposition \ref{prop:restriction}.

These structure constants are related to those $c_{u,v}^w$ in Equation \eqref{equ:structure1} by the following lemma.
\begin{lem}
Assume $Q$ is a reduced word, with product $\prod Q=w\in W$. Then
\[c_{u,v}^w=\sum_{\substack{R,S\subset Q,\\ \prod R=u,\prod S=v}}b_{R,S}^Q.\]
\end{lem}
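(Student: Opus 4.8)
The plan is to pull back the defining relation \eqref{equ:structure1} for the $c_{u,v}^w$ via $\pi^*$ and use the ring-homomorphism property of $\pi^*$ together with the two formulae in Lemma \ref{lem:relations}. Since $\pi^*$ is a ring homomorphism $H_T^*(G/B)_{\loc}\to H_T^*(\BS^Q)_{\loc}$, applying it to
\[\ssmT(Y(u)^\circ)\cup\ssmT(Y(v)^\circ)=\sum_{w'}c_{u,v}^{w'}\,\ssmT(Y(w')^\circ)\]
and substituting $\pi^*(\ssmT(Y(w')^\circ))=\sum_{J\subset Q,\ \prod J=w'}T_J$ turns the left side into $\bigl(\sum_{R\subset Q,\ \prod R=u}T_R\bigr)\bigl(\sum_{S\subset Q,\ \prod S=v}T_S\bigr)$ and the right side into $\sum_{w'}c_{u,v}^{w'}\sum_{J\subset Q,\ \prod J=w'}T_J=\sum_{J\subset Q}c_{u,v}^{\prod J}\,T_J$. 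Expanding the left side by \eqref{equ:defstructure2} gives $\sum_{R,S}\sum_J b_{R,S}^J\,T_J$, where $R,S$ range over subwords with $\prod R=u$, $\prod S=v$. Comparing coefficients of each $T_J$ (the $T_J$ form a basis of $H_T^*(\BS^Q)_{\loc}$), we get
\[c_{u,v}^{\prod J}=\sum_{\substack{R,S\subset Q,\\ \prod R=u,\ \prod S=v}}b_{R,S}^J\qquad\text{for every }J\subset Q.\]
Taking $J=Q$ and using $\prod Q=w$ yields the claimed identity.

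The one point that needs care is the interchange of the (finite) sums and the legitimacy of comparing coefficients: here $Q$ is finite, all the sums indexed by subwords of $Q$ are finite, the $b_{R,S}^J$ lie in $\Frac H_T^*(\pt)$, and $\{T_J\mid J\subset Q\}$ is a basis of the free $\Frac H_T^*(\pt)$-module $H_T^*(\BS^Q)_{\loc}$, so coefficient comparison is valid. I should also note that $\pi^*$ really is multiplicative on localized equivariant cohomology, which is immediate since $\pi$ is a morphism of varieties and localization is flat over $H_T^*(\pt)$.

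The main (and only real) obstacle is bookkeeping: one must be sure that when $\pi^*(\ssmT(Y(w')^\circ))$ is summed over all $w'$, the resulting double sum $\sum_{w'}\sum_{\prod J=w'}$ reassembles cleanly into $\sum_{J\subset Q}$ with the exponent $\prod J$ attached to $c_{u,v}$, i.e.\ that distinct $J$ with the same product are being handled consistently on both sides. This is automatic once one writes everything in the $T_J$-basis, but it is the place where a sign or indexing slip would occur. No deeper input is needed beyond Lemma \ref{lem:relations} and the basis property of $\{T_J\}$.
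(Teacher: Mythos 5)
Your proposal is correct and is essentially the paper's own argument: apply $\pi^*$ to Equation \eqref{equ:structure1}, rewrite both sides in the basis $\{T_J\}$ using Lemma \ref{lem:relations} and Equation \eqref{equ:defstructure2}, and extract the coefficient of $T_Q$. Your extra observation that comparing coefficients of every $T_J$ gives $c_{u,v}^{\prod J}=\sum_{R,S}b_{R,S}^J$ for all $J\subset Q$ is a harmless strengthening; the paper only uses the case $J=Q$.
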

\begin{proof}
Applying $\pi^*$ to Equation \eqref{equ:structure1} and using Lemma \ref{lem:relations}, we get
\begin{align*}
&\pi^*(\ssmT(Y(u)^\circ))\cup \pi^*(\ssmT(Y(v)^\circ))\\
=&(\sum_{R\subset Q, \prod R=u} T_R)(\sum_{S\subset Q, \prod S=v} T_S)\\
=&\sum_{\substack{R\subset Q, \prod R=u\\S\subset Q, \prod S=v}}\sum_Jb_{R,S}^J T_J\\
=&\sum_z c_{u,v}^z\pi^*(\ssmT(Y(z)^\circ))\\
=&\sum_z c_{u,v}^z \sum_{J\subset Q, \prod S=z} T_J.
\end{align*}
Taking the coefficient of $T_Q$ on both sides, we get
\[c_{u,v}^w=\sum_{\substack{R,S\subset Q,\\ \prod R=u,\prod S=v}}b_{R,S}^Q.\]
\end{proof}
Therefore, the main Theorem \ref{thm:structure1} is reduced to the following
\begin{thm}\label{thm:structure2}
For any word $Q$ (not necessarily reduced) with subwords $R,S\subset Q$, we have
\[b_{R,S}^Q=\left(\prod_{q\in Q}\frac{\alpha_q^{[q\in R\cap S]}}{1+\alpha_q}s_q(-T^\vee_q)^{[q\notin R\cup S]}\right)\cdot 1\in \Frac H_T^*(pt).\]
\end{thm}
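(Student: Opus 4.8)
The plan is to prove the formula by induction on the length $l = \#Q$ of the word. Write $Q = Q' q_l$ where $Q'$ is the word obtained by deleting the last letter $q_l = s_{\alpha_l}$, and let $\iota: \BS^{Q'} \hookrightarrow \BS^Q$ be the inclusion of the sub-Bott--Samelson variety (the subword $J = Q'$). The induction hypothesis gives the structure constants $b_{R',S'}^{Q'}$ for all subwords $R', S' \subset Q'$, and the goal is to express each $b_{R,S}^Q$ in terms of these, matching the recursive shape of the claimed product formula $\prod_{q \in Q}(\cdots)$. The key geometric input should be a recursion for the dual basis $\{T_J\}$ under the fibration structure $\BS^Q \to \BS^{Q'}$ (a $\bbP^1$-bundle): each $T_J$ for $J \subset Q$ should decompose according to whether $q_l \in J$ or not, relating it to $T_{J \cap Q'}$ on $\BS^{Q'}$ together with the class of the fiber $\bbP^1$ and its Chern data. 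I would first establish this recursion, presumably by combining the localization formula in Lemma \ref{lem:cellrestriction} with the explicit weights $\{(\prod_{j \in J, j \le i} s_j)(-\alpha_i)\}$ at the fixed point $J$, or by using the pushforward/pullback compatibility of CSM classes along the $\bbP^1$-bundle and dualizing.

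The heart of the argument is then a bookkeeping computation: multiply $T_R \cdot T_S$ using the recursion for each factor, push the product through the $\bbP^1$-bundle, and read off the coefficient of $T_J$. The three cases in the exponents — $q \in R \cap S$, $q \in R \cup S$ but not both, and $q \notin R \cup S$ — should correspond exactly to the three ways the last letters of $R$ and $S$ can interact: if $q_l$ lies in both $R$ and $S$ one picks up the factor $\frac{\alpha_{q_l}}{1+\alpha_{q_l}} s_{q_l}$; if $q_l$ lies in exactly one (say $R$), the operator $\frac{1}{1+\alpha_{q_l}} s_{q_l}$ appears with $T^\vee_{q_l}$ acting trivially (exponent $0$); and if $q_l \notin R \cup S$, the full $\frac{1}{1+\alpha_{q_l}} s_{q_l}(-T^\vee_{q_l})$ appears, with $T^\vee_{q_l} = \partial_{q_l} + s_{q_l}$ encoding the "no constraint" contribution of that $\bbP^1$ factor. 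A degenerate subcase: when $q_l \in J$ but, say, $q_l \notin R$, one must check that $b_{R,S}^J$ matches the corresponding sub-case of the formula for the word $J$ — i.e. the formula is genuinely about the subword $J$, not about all of $Q$, so the outer product should only ever run over $q \in J$ with the letters in $Q \setminus J$ contributing nothing; this is consistent with the statement since $b_{R,S}^J$ is defined only for $R, S \subset J$.

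The main obstacle I anticipate is the precise form of the recursion for $T_J$ under the $\bbP^1$-bundle, and in particular verifying that the operator $-T^\vee_q = -(\partial_q + s_q)$ — rather than, say, $\mathcal{T}^\vee_q$ or a bare $s_q$ — is what controls the $q \notin R \cup S$ case. This requires being careful about: (i) which Weyl group action ($s_q$ acting on $H_T^*(\pt)$ via the recursively accumulated product $\prod_{j \le i} s_j$) is meant in the formula, (ii) the normalization of the Euler class of the normal bundle $N_{\BS^{Q'}/\BS^Q}$, whose weight is $-\alpha_l$ up to the relevant Weyl twist, so that the denominators $1 + \alpha_q$ emerge correctly from $c^T$ of the relevant line bundles, and (iii) matching conventions so that the operators compose in the stated left-to-right order $\prod_{q \in Q}$. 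Once the $\bbP^1$-bundle recursion is pinned down, the inductive step should reduce to a short manipulation of the operators $\partial_q, s_q$ on $\Frac H_T^*(\pt)$, using $s_q^2 = \id$ and $\partial_q = \frac{1 - s_q}{\alpha_q}$ (equivalently $s_q = \id + \alpha_q \partial_q$ after clearing), and the base case $l = 0$, where $\BS^Q$ is a point and $b_{\emptyset,\emptyset}^{\emptyset} = 1$, is immediate.
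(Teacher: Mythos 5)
Your overall shape---induction on $\#Q$ with a three-way case analysis according to whether the distinguished letter lies in $R\cap S$, in exactly one of $R,S$, or in neither---is the same as the paper's. But there are two genuine gaps. First, you peel off the \emph{last} letter $q_l$, while in the claimed formula the factors $\frac{\alpha_q^{[\cdot]}}{1+\alpha_q}s_q(-T^\vee_q)^{[\cdot]}$ are composed left-to-right, so the operator attached to the \emph{first} letter is the outermost one. The operators involve $s_q$ and $\partial_q$ and do not commute with multiplication by elements of $\Frac H_T^*(\pt)$, so the induction hypothesis $b^{Q'}_{R',S'}=\bigl(\prod_{q\in Q'}\mathcal{O}_q\bigr)\cdot 1$ does not determine $\bigl(\prod_{q\in Q'}\mathcal{O}_q\bigr)(f)$ for the element $f=\mathcal{O}_{q_l}\cdot 1$ that your last-letter recursion would produce; the induction does not close. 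The paper peels the \emph{first} letter $s_\alpha$ of $Q=s_\alpha Q_0$, so the new operator is applied on the outside to the already-known quantity $b^{Q_0}_{R_0,S_0}$, and the recursion closes. (This is also why the paper's localization recursion, Corollary \ref{cor:reflections}, strips the first letter of $R$: the fixed-point weights $(\prod_{j\in J,\,j\le i}s_j)(-\alpha_i)$ accumulate Weyl twists from the left.)

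Second, the mechanism you leave open is exactly the load-bearing step. The paper first proves an explicit localization formula $T_S|_R=\bigl(\prod_{r\in R}\frac{\alpha_r^{[r\in S]}}{1+\alpha_r}s_r\bigr)\cdot 1$ (Proposition \ref{prop:restriction}, itself an induction), and then extracts $b^Q_{R,S}$ by restricting $T_RT_S=\sum_J b^J_{R,S}T_J$ to the single fixed point $Q$ and solving the triangular relation $b_{R,S}^Q=\bigl(T_R|_Q T_S|_Q-\sum_{J\subsetneq Q}b_{R,S}^J T_J|_Q\bigr)/T_Q|_Q$. Your proposed substitute---a recursion for the $T_J$ along the $\bbP^1$-bundle $\BS^Q\to\BS^{Q'}$---only gives, by adjunction and $\pi_{l*}\csmT(\BS^J_\circ)=\csmT(\BS^{J\setminus q_l}_\circ)$, the identity $\pi_l^*T^{Q'}_{J'}=T_{J'}+T_{J'\cup q_l}$; this determines the sum but not the individual classes with $q_l\in J$, so it does not by itself yield the needed coefficient extraction. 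Without either the localization formula for the dual basis or a genuinely worked-out bundle recursion, the "bookkeeping computation" cannot be carried out, and in particular the appearance of $-T^\vee_q$ in the $q\notin R\cup S$ case (which in the paper drops out of the identity $s_\alpha(-T^\vee_\alpha)=\frac{1}{\alpha}s_\alpha-\frac{1+\alpha}{\alpha}$ applied in Case I of the induction) is never established.
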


\section{Proof of Theorem \ref{thm:structure2}}\label{sec:proof}
In this section, we will first give a localization formula for the dual basis $\{T_J|J\subset Q\}$. Theorem \ref{thm:structure2} will follow by an induction argument on the number of elements in the word $Q$. 
\subsection{Localization of the dual basis}
First of all, we have
\begin{lem}\label{lem:pairingwithclosure}
For any $\gamma\in H_T^*(\BS^Q)$,
\[\langle\gamma, \csmT(\BS^J)\rangle_{\BS^Q}=\sum_{R\subset J}\gamma|_R\prod_{j\in J}\frac{1+(\prod_{\substack{r\in R\\ r\leq j}}s_r)(-\alpha_j)}{(\prod_{\substack{r\in R\\ r\leq j}}s_r)(-\alpha_j)}.\]
\end{lem}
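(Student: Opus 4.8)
The plan is to prove Lemma \ref{lem:pairingwithclosure} by a direct application of the Atiyah--Bott localization formula for the pairing $\langle-,-\rangle_{\BS^Q}$, combined with the explicit localization of $\csmT(\BS^J)$ established in Lemma \ref{lem:cellrestriction}. First I would write out the localization formula from the Lemma just after Equation \eqref{equ:cell}, namely
\[
\langle\gamma,\csmT(\BS^J)\rangle_{\BS^Q}=\sum_{K\subset Q}\frac{\gamma|_K\,\csmT(\BS^J)|_K}{\prod_{i\in Q}(\prod_{j\in K,\,j\leq i}s_j)(-\alpha_i)}.
\]
The key observation is that $\csmT(\BS^J)|_K=0$ unless $K\subset J$: indeed $\BS^J$ is a closed $T$-invariant submanifold, so its equivariant fundamental-class-type localizations vanish at fixed points not lying on it, and the fixed points of $\BS^J$ are exactly those indexed by subwords of $J$. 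Hence the sum collapses to $\sum_{R\subset J}$, matching the claimed range of summation.

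Next I would substitute the formula of Lemma \ref{lem:cellrestriction} for $\csmT(\BS^J)|_R$ into the surviving terms. The numerator contributes the factor $\prod_{j\in J}\bigl(1+(\prod_{r\in R,\,r\leq j}s_r)(-\alpha_j)\bigr)\prod_{q\in Q\setminus J}(\prod_{r\in R,\,r\leq q}s_r)(-\alpha_q)$. The denominator from the localization formula is $\prod_{i\in Q}(\prod_{r\in R,\,r\leq i}s_r)(-\alpha_i)$, which I split as the product over $i\in J$ times the product over $i\in Q\setminus J$. The $Q\setminus J$ portion of the numerator cancels exactly against the $Q\setminus J$ portion of the denominator (here one uses that, since $R\subset J$, the partial product $\prod_{r\in R,\,r\leq i}s_r$ appearing in $\csmT(\BS^J)|_R$ is literally the same element of $W$ as the one appearing in the pairing's denominator weight). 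What remains is $\gamma|_R$ times $\prod_{j\in J}\frac{1+(\prod_{r\in R,\,r\leq j}s_r)(-\alpha_j)}{(\prod_{r\in R,\,r\leq j}s_r)(-\alpha_j)}$, which is precisely the asserted right-hand side.

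I expect the main (though modest) obstacle to be bookkeeping care in the indexing: one must be careful that the partial-product elements $\prod_{r\in R,\,r\leq j}s_r$ are computed with respect to the fixed linear order on $Q$ and agree between Lemma \ref{lem:cellrestriction} and the pairing formula, and that the cancellation over $Q\setminus J$ is term-by-term rather than merely a product identity. There is no substantive difficulty beyond this: everything reduces to the vanishing of $\csmT(\BS^J)$ off $\BS^J$ and the normalization $\csmT(\BS^J)|_R=c^T(T_R\BS^J)\,e^T(N_{\BS^J/\BS^Q}|_R)$ already recorded. Once the lemma is in hand, it will feed the induction on $\#Q$ in the proof of Theorem \ref{thm:structure2}, since it expresses the pairing against the closure classes $\csmT(\BS^J)$ — which by Equation \eqref{equ:closure} are the ``sum-of-dual-basis'' vectors — purely in terms of the localizations $\gamma|_R$.
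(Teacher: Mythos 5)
Your proposal is correct and follows essentially the same route as the paper: apply the Atiyah--Bott localization formula for $\langle-,-\rangle_{\BS^Q}$, observe that $\csmT(\BS^J)|_K$ vanishes for $K\not\subset J$ so the sum restricts to subwords of $J$, substitute the formula of Lemma \ref{lem:cellrestriction}, and cancel the $Q\setminus J$ factors against the corresponding factors of the tangent weight denominator. The paper's proof is just a two-line version of exactly this computation.
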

\begin{proof}
By the Atiyah--Bott localization formula and Lemma \ref{lem:cellrestriction}, we have
\begin{align*}
\langle\gamma, \csmT(\BS^J)\rangle_{\BS^Q}&=\sum_{R\subset Q}\frac{\gamma|_R \csmT(\BS^J)|_R}{e^T(T_R\BS^Q)}\\
&=\sum_{R\subset J}\gamma|_R\prod_{j\in J}\frac{1+(\prod_{\substack{r\in R\\ r\leq j}}s_r)(-\alpha_j)}{(\prod_{\substack{r\in R\\ r\leq j}}s_r)(-\alpha_j)}.
\end{align*}
\end{proof}

The localizaiton of the dual basis is given by the following formula.
\begin{prop}\label{prop:restriction}
For any subwords $S\subset R\subset Q$, 
\[T_S|_R=\left(\prod_{r\in R}\frac{\alpha_r^{[r\in S]}}{1+\alpha_r}s_r\right)\cdot 1\in \Frac H_T^*(pt).\]
\end{prop}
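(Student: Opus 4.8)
To compute $T_S|_R$ for $S\subset R\subset Q$, the natural strategy is to induct on the length of $Q$, stripping off the last letter $s_l$. Write $Q = Q' s_l$ where $Q'$ is the word without its final letter. The key geometric fact is the projection $p : \BS^Q \to \BS^{Q'}$ (forgetting the last $P_{\alpha_l}$-factor), which realizes $\BS^Q$ as a $\mathbb{P}^1$-bundle over $\BS^{Q'}$, and the fact that the section $\BS^{Q'} \cong \BS^{Q'\cdot\emptyset} \hookrightarrow \BS^Q$ (inserting $b\in B$ in the last slot) together with the complementary cells give the cell decomposition. Under $p$, a subword $J\subset Q$ either contains $s_l$ or not; the two torus fixed points of the fiber $\mathbb{P}^1$ over the fixed point $J'\subset Q'$ correspond to $J'$ and $J' s_l$. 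So I would relate the dual basis $\{T_J \mid J\subset Q\}$ on $\BS^Q$ to the dual basis $\{T_{J'} \mid J'\subset Q'\}$ on $\BS^{Q'}$, and then read off the localization recursively.

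\textbf{Key steps.} First, I would establish the recursion for the dual basis itself: show that $p^*$ applied to $\csmT(\BS^{J'}_\circ)$ on $\BS^{Q'}$ is related, via the Hecke-type operator coming from the $\mathbb{P}^1$-bundle (the operator $\partial_l$ lifted appropriately, incorporating the normalization $c^T$ of the relative tangent bundle), to the CSM classes $\csmT(\BS^{J'}_\circ)$ and $\csmT(\BS^{J' s_l}_\circ)$ on $\BS^Q$; this is exactly the content behind \cite[Lemma 3.1]{AM16} in the Bott--Samelson setting, which was already used in the proof of Lemma \ref{lem:relations}. Dualizing this relation under the Poincar\'e pairings $\langle-,-\rangle_{\BS^Q}$ and $\langle-,-\rangle_{\BS^{Q'}}$ (which are compatible via pushforward $p_*$ and the projection formula) yields a formula expressing $T_J|_R$ on $\BS^Q$ in terms of $T_{J'}|_{R'}$ on $\BS^{Q'}$, where $R'$ is $R$ with $s_l$ removed if present. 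Second, with $R = R' s_l^{[s_l\in R]}$, I would split into the cases $s_l \in R$ vs.\ $s_l \notin R$ and $s_l\in S$ vs.\ $s_l\notin S$ and verify directly that the candidate formula
\[
T_S|_R = \left(\prod_{r\in R}\frac{\alpha_r^{[r\in S]}}{1+\alpha_r}\,s_r\right)\cdot 1
\]
satisfies the same recursion, using Lemma \ref{lem:cellrestriction} for the restrictions $\csmT(\BS^J)|_R$ and the explicit weights $(\prod_{r\in R, r\leq j}s_r)(-\alpha_j)$ at the fixed points. The base case $Q = \emptyset$ (or $Q$ a single letter) is immediate since then $T_\emptyset = 1$.

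\textbf{Alternative, more hands-on route.} Rather than the bundle-theoretic dualization, one can characterize $T_S$ directly: by \eqref{equ:duality2} and \eqref{equ:closure}, $T_S$ is the unique class with $\langle \csmT(\BS^J), T_S\rangle_{\BS^Q} = [S\subset J]$ for all $J$, and by the triangularity remark ($T_S|_R = 0$ unless $S\subset R$) it suffices to solve for $T_S|_R$ on the poset $\{R : S\subset R\subset Q\}$. Plugging the claimed formula into Lemma \ref{lem:pairingwithclosure}, one must check
\[
\sum_{S\subset R\subset J}\left(\prod_{r\in R}\frac{\alpha_r^{[r\in S]}}{1+\alpha_r}s_r\right)\!\cdot 1\;\prod_{j\in J}\frac{1+(\prod_{r\in R, r\leq j}s_r)(-\alpha_j)}{(\prod_{r\in R, r\leq j}s_r)(-\alpha_j)} = [S\subset J]
\]
for every $J\supset S$. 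This is a purely combinatorial identity in $\Frac H_T^*(\pt)$; I would prove it by induction on $|J\setminus S|$, peeling off one element of $J$ not in $S$ at a time and observing that the sum over whether that element lies in $R$ telescopes (the two terms, corresponding to including or excluding that letter in $R$, combine using $\frac{1}{1+\alpha}s_r$ acting and the factor $\frac{1 + (\cdots)(-\alpha_j)}{(\cdots)(-\alpha_j)}$ collapsing).

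\textbf{Main obstacle.} The hard part will be bookkeeping the non-commutativity: the operators $s_r$ in the product $\prod_{r\in R}\frac{\alpha_r^{[r\in S]}}{1+\alpha_r}s_r$ act on everything to their right, and the weights $(\prod_{r\in R, r\leq j}s_r)(-\alpha_j)$ at the fixed point $R$ are twisted by partial products of those same reflections, so the cancellations in the telescoping are not visible term-by-term but only after carefully tracking how each $s_r$ propagates through. Getting the order of composition right (the word $Q$ is read left-to-right, $r\leq j$ means $r$ precedes $j$ in $Q$) and confirming that the induction step in the bundle picture matches the convention $T^\vee_\alpha = \partial_\alpha + s_\alpha$ with the correct sign $s_q(-T^\vee_q)$ is where the real care is needed; everything else is a routine, if somewhat lengthy, verification.
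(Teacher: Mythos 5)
Your second, ``hands-on'' route is precisely the paper's proof: one defines a candidate class $T'_S$ by the claimed localizations, uses Equation \eqref{equ:closure} to reduce the duality \eqref{equ:duality2} to $\langle T'_S,\csmT(\BS^J)\rangle_{\BS^Q}=[S\subset J]$, rewrites the operator product $\bigl(\prod_{r\in R}\frac{\alpha_r^{[r\in S]}}{1+\alpha_r}s_r\bigr)\cdot 1$ as the explicit rational function $\prod_{s\in S}(\prod_{r\in R,\,r\le s}s_r)(-\alpha_s)\big/\prod_{r\in R}\bigl(1+(\prod_{r'\in R,\,r'\le r}s_{r'})(-\alpha_r)\bigr)$, and then verifies exactly the identity you display by induction. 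The bookkeeping obstacle you flag is resolved in the paper by always peeling off the \emph{first} letter $s_\alpha$ of $J$ rather than an arbitrary element of $J\setminus S$: if $s_\alpha\in S$ the whole sum is $s_\alpha$ applied to the corresponding sum for $(J_0,S_0)$, and if $s_\alpha\notin S$ the sum splits over $s_\alpha\in R$ versus $s_\alpha\notin R$ into $\frac{1}{\alpha}s_\alpha(1)+\frac{1-\alpha}{-\alpha}=1$; because $s_\alpha$ is first, every partial product of reflections acquires a uniform prefix and the two halves really do combine term by term, as you hoped. (Peeling the \emph{last} letter of $J$ would work equally cleanly; peeling an interior letter would not, since it perturbs all later partial products.) Your primary $\mathbb{P}^1$-bundle route is not the one taken for this proposition and would require pushforward/pullback compatibilities along the tower $\BS^Q\to\BS^{Q'}$ that the paper only establishes for $\pi:\BS^Q\to G/B$; note also that for a one-letter word $T_\emptyset=1/c^T(T\mathbb{P}^1)\neq 1$, so the base case of your induction should be $Q=\emptyset$ (equivalently $J=\emptyset$ in the identity), where the claim is trivial.
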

\begin{proof}
By the localization theorem, let us denote the classes $T_S'\in H_T^*(\BS^Q)$ defined by the localization conditions 
\[T'_S|_R=\left\{\begin{array}{cc}
\left(\prod_{r\in R}\frac{\alpha_r^{[r\in S]}}{1+\alpha_r}s_r\right)\cdot 1 & \textit{ if }S\subset R\\
0 & \textit{ otherwise }.
\end{array}\right.\]
By Equation \eqref{equ:closure}, we only need to show
\[\langle T'_S,\csmT(\BS^J)\rangle_{\BS^Q}=\left\{\begin{array}{cc}
1, & \textit{ if } S\subset J,\\
0, &\textit{ otherwise }.
\end{array}\right.\]
Since 
\[T'_S|_R=\left(\prod_{r\in R}\frac{\alpha_r^{[r\in S]}}{1+\alpha_r}s_r\right)\cdot 1= \frac{\prod_{s\in S}(\prod_{\substack{r\in R\\ r\leq s}}s_r)(-\alpha_s)}{\prod_{r\in R}\left(1+(\prod_{\substack{r'\in R\\ r'\leq r}}s_{r'})(-\alpha_r)\right)},\] 
Lemma \ref{lem:pairingwithclosure} gives, 
\begin{align*}
&\langle T'_S,\csmT(\BS^J)\rangle_{\BS^Q}\\
=&\sum_{S\subset R\subset J}T_S'|_R\prod_{j\in J}\frac{1+(\prod_{\substack{r\in R\\ r\leq j}}s_r)(-\alpha_j)}{(\prod_{\substack{r\in R\\ r\leq j}}s_r)(-\alpha_j)}\\
=&\sum_{S\subset R\subset J}\frac{\prod_{j\in J\setminus R}\left(1+(\prod_{\substack{r\in R\\ r\leq j}}s_r)(-\alpha_j)\right)}{\prod_{j\in J\setminus S}(\prod_{\substack{r\in R\\ r\leq j}}s_r)(-\alpha_j)}.
\end{align*}
Thus, it suffices to check that for any $S\subset J\subset Q$,
\begin{equation}\label{equ:reducedstep}
\sum_{S\subset R\subset J}\frac{\prod_{j\in J\setminus R}\left(1+(\prod_{\substack{r\in R\\ r\leq j}}s_r)(-\alpha_j)\right)}{\prod_{j\in J\setminus S}(\prod_{\substack{r\in R\\ r\leq j}}s_r)(-\alpha_j)}=1.
\end{equation}
We prove it by induction on the number of words in $J$. If $J=\emptyset$, it is obvious. Now assume $\#J\geq 1$, and Equation \eqref{equ:reducedstep} is true for any $J'$ such that $\#J'<\#J$. Suppose $s_\alpha$ is the first word in $J$ and $J=s_\alpha J_0$. We consider the following two cases according to whether $S$ contains $s_\alpha$ as its first word or not.

{\em Case I:} $S$ contains $s_\alpha$ as the first word. Then for all the $R$ such that $S\subset R\subset J$, $s_\alpha$ will also be the first word for $R$. Let $S=s_\alpha S_0$ and $R=s_\alpha R_0$. Then 
\begin{align*}
&\sum_{S\subset R\subset J}\frac{\prod_{j\in J\setminus R}\left(1+(\prod_{\substack{r\in R\\ r\leq j}}s_r)(-\alpha_j)\right)}{\prod_{j\in J\setminus S}(\prod_{\substack{r\in R\\ r\leq j}}s_r)(-\alpha_j)}\\
=&s_\alpha\left(\sum_{S_0\subset R_0\subset J_0}\frac{\prod_{j\in J_0\setminus R_0}\left(1+(\prod_{\substack{r\in R_0\\ r\leq j}}s_r)(-\alpha_j)\right)}{\prod_{j\in J_0\setminus S_0}(\prod_{\substack{r\in R_0\\ r\leq j}}s_r)(-\alpha_j)}\right)\\
=&s_\alpha(1)\\
=&1,
\end{align*}
where the second equality follows from induction step.

{\em Case II:} $s_\alpha$ is not the first word in $S$. We separate the sum in Equation \eqref{equ:reducedstep} into two terms as follows
\begin{align*}
&\sum_{S\subset R\subset J}\frac{\prod_{j\in J\setminus R}\left(1+(\prod_{\substack{r\in R\\ r\leq j}}s_r)(-\alpha_j)\right)}{\prod_{j\in J\setminus S}(\prod_{\substack{r\in R\\ r\leq j}}s_r)(-\alpha_j)}\\
=&\sum_{\substack{S\subset R\subset J\\ s_\alpha\in R}}\frac{\prod_{j\in J\setminus R}\left(1+(\prod_{\substack{r\in R\\ r\leq j}}s_r)(-\alpha_j)\right)}{\prod_{j\in J\setminus S}(\prod_{\substack{r\in R\\ r\leq j}}s_r)(-\alpha_j)}+\sum_{\substack{S\subset R\subset J\\ s_\alpha\notin R}}\frac{\prod_{j\in J\setminus R}\left(1+(\prod_{\substack{r\in R\\ r\leq j}}s_r)(-\alpha_j)\right)}{\prod_{j\in J\setminus S}(\prod_{\substack{r\in R\\ r\leq j}}s_r)(-\alpha_j)}\\
=&\frac{1}{\alpha}s_\alpha(1)+\frac{1-\alpha}{-\alpha}\\
=&1,
\end{align*} 
where the second equality follows from induction. This finishes the proof of Equation \eqref{equ:reducedstep}.
\end{proof}
Suppose $s_\alpha$ is the first word in $R$ and $R=s_\alpha R_0$. Then an immediate corollary is
\begin{cor}\label{cor:reflections} 
\begin{enumerate}
\item 
If $s_\alpha$ is the first word in $S$ and $S=s_\alpha S_0$, then
\[T_S|_R=\frac{\alpha}{1+\alpha}s_\alpha (T_{S_0}|_{R_0}).\]
\item 
If $s_\alpha$ is not the first word in $S$, then
\[T_S|_R=\frac{1}{1+\alpha}s_\alpha (T_{S}|_{R_0}).\]
\end{enumerate}

\end{cor}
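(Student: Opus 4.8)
The plan is to read both identities directly off the closed formula of Proposition \ref{prop:restriction}, simply by peeling off the leftmost operator in its ordered product. Recall that the product $\prod_{r\in R}$ there is taken in the order in which the positions occur in $Q$, so its leftmost factor is the one indexed by the first position of $R$, namely $s_\alpha$, and the remaining factors are precisely those indexed by $R_0=R\setminus\{s_\alpha\}$. Thus I would begin by writing
\[
T_S|_R=\frac{\alpha^{[s_\alpha\in S]}}{1+\alpha}\,s_\alpha\left(\prod_{r\in R_0}\frac{\alpha_r^{[r\in S]}}{1+\alpha_r}\,s_r\right)\cdot 1,
\]
where the first factor has been pulled out in front; everything to the right of $s_\alpha$ is again an ordered product of the same shape over the shorter word $R_0$.

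For part (1), $s_\alpha$ is also the first position of $S$, so $[s_\alpha\in S]=1$ and the prefactor is $\frac{\alpha}{1+\alpha}s_\alpha$. Writing $S=s_\alpha S_0$, I would note that every position $r\in R_0$ is distinct from the first position $s_\alpha$, whence $[r\in S]=[r\in S_0]$; it is essential here to treat $R$ and $S$ as sets of positions in $Q$ rather than as Weyl-group elements, since a given simple reflection may recur in $Q$. The inner product is then exactly the formula of Proposition \ref{prop:restriction} for the pair $S_0\subset R_0$, i.e.\ $T_{S_0}|_{R_0}$, giving $T_S|_R=\frac{\alpha}{1+\alpha}s_\alpha(T_{S_0}|_{R_0})$.

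For part (2), the hypothesis that $s_\alpha$ is not the first letter of $S$ forces the position $s_\alpha$ to lie outside $S$: since $S\subset R$ and $s_\alpha$ is the first position of $R$, were $s_\alpha$ to belong to $S$ it would automatically be its first letter. Hence $[s_\alpha\in S]=0$, the prefactor is $\frac{1}{1+\alpha}s_\alpha$, and moreover $S\subset R_0$. Applying Proposition \ref{prop:restriction} to the pair $S\subset R_0$ identifies the inner product with $T_S|_{R_0}$, yielding $T_S|_R=\frac{1}{1+\alpha}s_\alpha(T_S|_{R_0})$.

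The only point requiring care—and the sole place where anything could slip—is the bookkeeping of repeated letters just mentioned: one must check that stripping off the leftmost factor leaves the indicator exponents on the surviving positions unchanged, and that the truncated words $R_0$, $S_0$ (or $S$) still satisfy the containment hypothesis of Proposition \ref{prop:restriction}. Both are immediate once positions, rather than abstract reflections, are used throughout, so the corollary follows at once.
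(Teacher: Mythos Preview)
Your argument is correct and is exactly what the paper intends: the corollary is stated there without proof as ``an immediate corollary'' of Proposition~\ref{prop:restriction}, and your peeling-off of the leftmost factor in the ordered product is precisely the immediate step being alluded to. The care you take with positions versus abstract reflections is appropriate and needed, but no further argument is required.
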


\subsection{Proof of Theorem \ref{thm:structure2}}
In this section, we prove Theorem \ref{thm:structure2} by induction on $\#Q$. 

Restricting both sides of Equation \eqref{equ:defstructure2} to the fixed point $Q$, we get
\[T_R|_Q T_S|_Q=\sum_{R,S\subset J\subset Q}b_{R,S}^J T_J|_Q.\]
Therefore,
\begin{equation}\label{equ:recursion}
b_{R,S}^Q=\frac{T_R|_Q T_S|_Q-\sum_{R,S\subset J\subsetneq Q}b_{R,S}^J T_J|_Q}{T_Q|_Q}.
\end{equation}

\noindent\textit{Proof of Theorem \ref{thm:structure2}}
We prove the Theorem by induction on $\#Q$. If $Q=\emptyset$, it is obvious. Assume the theorem is true for any $Q$ with $\#Q=k\geq 0$. Now assume $\#Q=k+1$, the first word is $s_\alpha$ and $Q=s_\alpha Q_0$. Consider different cases for $R,S$ containing, or not containing the first letter of $Q$.

\textit{Case I}: Suppose neither $R$ or $S$ contains the first word $s_\alpha$ of $Q$. Then we have
\begin{lem}
The sum in Equation \eqref{equ:recursion} can be rewritten as  
\[\sum_{R,S\subset J\subsetneq Q}b_{R,S}^J T_J|_Q=b_{R,S}^{Q_0}\frac{1}{1+\alpha}s_\alpha
(T_{Q_0}|_{Q_0})+\sum_{R,S\subset J\subsetneq Q_0}\frac{1}{(1+\alpha)^2}s_\alpha(b_{R,S}^J)s_\alpha(T_{J}|_{Q_0}).\]
\end{lem}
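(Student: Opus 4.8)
The plan is to partition the index set $\{J \mid R,S\subset J\subsetneq Q\}$ of the left-hand sum according to whether $J$ contains the first letter $s_\alpha$ of $Q$, and then to reduce the whole statement to a single operator identity on $\Frac H_T^*(pt)$. Since we are in Case~I we have $s_\alpha\notin R\cup S$, so: the subwords $J\subsetneq Q$ with $R,S\subset J$ \emph{not} containing $s_\alpha$ are exactly $Q_0$ together with the $J\subsetneq Q_0$ satisfying $R,S\subset J$; the ones \emph{containing} $s_\alpha$ are exactly the words $s_\alpha J$ with $J\subsetneq Q_0$ and $R,S\subset J$ (the value $J=Q_0$ being excluded because $s_\alpha Q_0=Q$). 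This decomposition is disjoint and exhaustive, so
\[
\sum_{R,S\subset J\subsetneq Q}b_{R,S}^J\,T_J|_Q
= b_{R,S}^{Q_0}\,T_{Q_0}|_Q
+ \sum_{R,S\subset J\subsetneq Q_0}\Bigl(b_{R,S}^J\,T_J|_Q + b_{R,S}^{s_\alpha J}\,T_{s_\alpha J}|_Q\Bigr),
\]
and it remains to rewrite the restrictions and the coefficient $b_{R,S}^{s_\alpha J}$.

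For the restrictions I would apply Corollary~\ref{cor:reflections} with ambient word $Q=s_\alpha Q_0$ and reference fixed point $Q$. The subwords $Q_0$ and $J$ (for $J\subset Q_0$) do not contain the first letter of $Q$, so part~(2) gives $T_{Q_0}|_Q=\tfrac1{1+\alpha}s_\alpha(T_{Q_0}|_{Q_0})$ and $T_J|_Q=\tfrac1{1+\alpha}s_\alpha(T_J|_{Q_0})$; the subword $s_\alpha J$ does contain the first letter of $Q$, so part~(1) gives $T_{s_\alpha J}|_Q=\tfrac{\alpha}{1+\alpha}s_\alpha(T_J|_{Q_0})$. The first of these immediately produces the term $b_{R,S}^{Q_0}\tfrac1{1+\alpha}s_\alpha(T_{Q_0}|_{Q_0})$ of the asserted formula. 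For the coefficient I would invoke the induction hypothesis, i.e.\ Theorem~\ref{thm:structure2} for words of length $\le\#Q_0=\#Q-1$: because $s_\alpha\notin R\cup S$, the leftmost factor of the product formula for $b_{R,S}^{s_\alpha J}$ is $\tfrac1{1+\alpha}s_\alpha(-T^\vee_\alpha)$ and the rest of the product applied to $1$ is $b_{R,S}^J$, whence $b_{R,S}^{s_\alpha J}=\tfrac1{1+\alpha}\,s_\alpha\!\bigl(-T^\vee_\alpha(b_{R,S}^J)\bigr)$.

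Substituting these into a generic inner pair and factoring out the common scalar $\tfrac1{1+\alpha}s_\alpha(T_J|_{Q_0})$, the pair becomes $\tfrac1{1+\alpha}s_\alpha(T_J|_{Q_0})\cdot\bigl(\id+\tfrac{\alpha}{1+\alpha}\,s_\alpha\circ(-T^\vee_\alpha)\bigr)(b_{R,S}^J)$, so the lemma reduces to the operator identity
\[
\id+\frac{\alpha}{1+\alpha}\,s_\alpha\circ(-T^\vee_\alpha)=\frac1{1+\alpha}\,s_\alpha
\qquad\text{on }\Frac H_T^*(pt).
\]
I would verify this by a short computation with $T^\vee_\alpha=\partial_\alpha+s_\alpha$, using $\alpha\,\partial_\alpha=\id-s_\alpha$ and the $s_\alpha$-invariance $s_\alpha\partial_\alpha=\partial_\alpha$ (immediate from $\partial_\alpha(f)=\tfrac{f-s_\alpha f}{\alpha}$ and $s_\alpha(\alpha)=-\alpha$): these give $s_\alpha\circ(-T^\vee_\alpha)=-\partial_\alpha-\id$, hence $\id+\tfrac{\alpha}{1+\alpha}(-\partial_\alpha-\id)=\tfrac1{1+\alpha}(\id-\alpha\partial_\alpha)=\tfrac1{1+\alpha}s_\alpha$. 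Plugging back, each inner pair turns into $\tfrac1{(1+\alpha)^2}s_\alpha(b_{R,S}^J)\,s_\alpha(T_J|_{Q_0})$, which is precisely the summand of the second sum in the statement.

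The only real obstacle I anticipate is the combinatorial bookkeeping of the partition: one must check that $\{J\mid R,S\subset J\subsetneq Q\}$ splits \emph{exactly} into $\{Q_0\}$, $\{J\mid R,S\subset J\subsetneq Q_0\}$ and $\{s_\alpha J\mid R,S\subset J\subsetneq Q_0\}$ with nothing repeated or omitted, and it is exactly here that the Case~I hypothesis $s_\alpha\notin R\cup S$ is used (which is why this rewriting is special to Case~I). A secondary point is to keep straight the convention in the product formula — the scalar $\tfrac{\alpha_q}{1+\alpha_q}$ multiplies on the left of $s_q(-T^\vee_q)$ and is not itself acted on by $s_q$, consistently with Proposition~\ref{prop:restriction} — since this is what legitimizes the factorization $b_{R,S}^{s_\alpha J}=\tfrac1{1+\alpha}s_\alpha(-T^\vee_\alpha)(b_{R,S}^J)$ and the factoring-out in the inner pair; the operator identity itself is a one-line check.
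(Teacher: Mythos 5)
Your proposal is correct and follows essentially the same route as the paper: split the sum according to whether $J$ contains $s_\alpha$, apply Corollary \ref{cor:reflections} to the restrictions, use the induction hypothesis to write $b_{R,S}^{s_\alpha J}=\tfrac{1}{1+\alpha}s_\alpha(-T_\alpha^\vee)(b_{R,S}^J)$, and collapse each inner pair via the identity $\id+\tfrac{\alpha}{1+\alpha}s_\alpha\circ(-T_\alpha^\vee)=\tfrac{1}{1+\alpha}s_\alpha$, which is exactly the paper's computation $s_\alpha(-T_\alpha^\vee)=\tfrac{1}{\alpha}s_\alpha-\tfrac{1+\alpha}{\alpha}$ in repackaged form. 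The bookkeeping of the partition and the use of the Case~I hypothesis $s_\alpha\notin R\cup S$ are handled as in the paper.
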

\begin{proof}
We separate the left hand side into two parts as follows
\begin{align*}
&\sum_{R,S\subset J\subsetneq Q}b_{R,S}^J T_J|_Q\\
=&\sum_{\substack{R,S\subset J\subsetneq Q\\ s_\alpha\notin J}}b_{R,S}^J T_J|_Q+\sum_{\substack{R,S\subset J\subsetneq Q\\ s_\alpha\in J}}b_{R,S}^J T_J|_Q\\
=&\sum_{R,S\subset J\subset Q_0}b_{R,S}^J T_J|_{s_\alpha Q_0}+\sum_{R,S\subset J_0\subsetneq Q_0}b_{R,S}^{s_\alpha J_0} T_{s_\alpha J_0}|_{s_\alpha Q_0}\\
=&\left(b_{R,S}^{Q_0} \frac{1}{1+\alpha}s_\alpha(T_{Q_0}|_{Q_0})+\frac{1}{1+\alpha}\sum_{R,S\subset J\subsetneq Q_0}b_{R,S}^J s_\alpha(T_J|_{ Q_0})\right)\\
&+\left(\frac{\alpha}{1+\alpha}\sum_{R,S\subset J_0\subsetneq Q_0}\frac{1}{1+\alpha}s_\alpha(-T_\alpha^\vee)(b_{R,S}^{J_0}) s_\alpha(T_{J_0}|_{Q_0})\right)\\
=&b_{R,S}^{Q_0}\frac{1}{1+\alpha}s_\alpha
(T_{Q_0}|_{Q_0})+\sum_{R,S\subset J\subsetneq Q_0}\frac{1}{(1+\alpha)^2}s_\alpha(b_{R,S}^J)s_\alpha(T_{J}|_{Q_0}).
\end{align*}
Here the third equality follows from Corollary \ref{cor:reflections} and the induction step for $s_\alpha J_0\subsetneq Q$, and the last equality follows from the formula $s_\alpha(-T_\alpha^\vee)=s_\alpha(-\partial_\alpha-s_\alpha)=-s_\alpha \frac{1}{\alpha}(1-s_\alpha)-1=\frac{1}{\alpha}s_\alpha-\frac{1+\alpha}{\alpha}$.
\end{proof}
Plugging this into Equation \eqref{equ:recursion}, we get
\begin{align*}
b_{R,S}^Q&=\frac{T_R|_Q T_S|_Q-\sum_{R,S\subset J\subsetneq Q}b_{R,S}^J T_J|_Q}{T_Q|_Q}\\
&=\frac{\frac{1}{(1+\alpha)^2}s_\alpha(T_R|_{Q_0} T_S|_{Q_0})-b_{R,S}^{Q_0}\frac{1}{1+\alpha}s_\alpha(T_{Q_0}|_{Q_0})-\sum_{R,S\subset J\subsetneq Q_0}\frac{1}{(1+\alpha)^2}s_\alpha(b_{R,S}^J)s_\alpha(T_{J}|_{Q_0})}{\frac{\alpha}{1+\alpha}s_\alpha(T_{Q_0}|_{Q_0})}\\
&=\frac{1}{\alpha(1+\alpha)}s_\alpha\left(\frac{T_R|_{Q_0} T_S|_{Q_0}-\sum_{R,S\subset J\subsetneq Q_0}b_{R,S}^JT_{J}|_{Q_0})}{T_{Q_0}|_{Q_0}}\right)-\frac{1}{\alpha}b_{R,S}^{Q_0}\\
&=\frac{1}{\alpha(1+\alpha)}s_\alpha(b_{R,S}^{Q_0})-\frac{1}{\alpha}b_{R,S}^{Q_0}\\
&=\frac{1}{1+\alpha}s_\alpha(-T^\vee_\alpha)(b_{R,S}^{Q_0}),
\end{align*}
as desired.

\textit{Case II}: Suppose both $R$ or $S$ contain the first word $s_\alpha$ of $Q$. Let $R=s_\alpha R_0$ and $S=S_\alpha S_0$. Every $J$ satisfying $R,S\subset J\subset Q$ also contains $s_\alpha$ as its first word. Let $J=s_\alpha J_0$. Then by induction step for $J\subsetneq Q$ and Corollary \ref{cor:reflections}, we have
\begin{align*}
\sum_{R,S\subset J\subsetneq Q}b_{R,S}^J T_J|_Q&=\sum_{R_0,S_0\subset J_0\subsetneq Q_0}b_{s_\alpha R_0,s_\alpha S_0}^{s_\alpha J_0} T_{s_\alpha J_0}|_{s_\alpha Q_0}\\
&=\sum_{R_0,S_0\subset J_0\subsetneq Q_0}\frac{\alpha}{1+\alpha}s_\alpha(b_{R_0, S_0}^{J_0})\frac{\alpha}{1+\alpha} s_\alpha(T_{ J_0}|_{Q_0}).
\end{align*}
Putting this into Equation \eqref{equ:recursion}, we get
\begin{align*}
b_{R,S}^Q&=\frac{T_R|_Q T_S|_Q-\sum_{R,S\subset J\subsetneq Q}b_{R,S}^J T_J|_Q}{T_Q|_Q}\\
&=\frac{\alpha}{1+\alpha}s_\alpha\left(\frac{T_{R_0}|_{Q_0} T_{S_0}|_{Q_0}-\sum_{R_0,S_0\subset J_0\subsetneq Q_0}b_{R_0,S_0}^{J_0} T_{J_0}|_{Q_0}}{T_{Q_0}|_{Q_0}}\right)\\
&=\frac{\alpha}{1+\alpha}s_\alpha(b_{R,S}^{Q_0}),
\end{align*}
as desired.

As $b_{R,S}^Q=b_{S,R}^Q$, we are left with the last case.

\textit{Case III}:  Suppose $R=s_\alpha R_0$, while $S$ does not begin with $s_\alpha$. Every $J$ satisfying $R,S\subset J\subset Q$ also contains $s_\alpha$ as its first word. Let $J=s_\alpha J_0$.  Then by induction step for $J\subsetneq Q$ and Corollary \ref{cor:reflections}, we have
\begin{align*}
\sum_{R,S\subset J\subsetneq Q}b_{R,S}^J T_J|_Q&=\sum_{R_0,S\subset J_0\subsetneq Q_0}b_{s_\alpha R_0,S}^{s_\alpha J_0} T_{s_\alpha J_0}|_{s_\alpha Q_0}\\
&=\sum_{R_0,S\subset J_0\subsetneq Q_0}\frac{1}{1+\alpha}s_\alpha(b_{R_0, S}^{J_0})\frac{\alpha}{1+\alpha} s_\alpha(T_{ J_0}|_{Q_0}).
\end{align*}
Plugging this into Equation \eqref{equ:recursion} and using Corollary \ref{cor:reflections}, we get
\begin{align*}
b_{R,S}^Q&=\frac{T_R|_Q T_S|_Q-\sum_{R,S\subset J\subsetneq Q}b_{R,S}^J T_J|_Q}{T_Q|_Q}\\
&=\frac{1}{1+\alpha}s_\alpha\left(\frac{T_{R_0}|_{Q_0} T_{S}|_{Q_0}-\sum_{R_0,S\subset J_0\subsetneq Q_0}b_{R_0,S}^{J_0} T_{J_0}|_{Q_0}}{T_{Q_0}|_{Q_0}}\right)\\
&=\frac{1}{1+\alpha}s_\alpha(b_{R,S}^{Q_0}).
\end{align*}
This finishes the proof in all cases.

\section{Parabolic case}\label{sec:P}
In this section, we generalize Theorem \ref{thm:structure1} to the parabolic case. 

Let $P$ be a parabolic subgroup containing the Borel subgroup $B$. Let $W_P$ be the Weyl group of a Levi subgroup of $P$, and $W^P=W/W_P$ be the set of minimal length representatives. For any $w\in W^P$, let $X(wW_P)^\circ$ (resp. $Y(wW_P)^\circ$) denote the Schubert cell $BwP/P$ (resp. $B^{-}wP/P$). Let $p:G/B\rightarrow G/B$ be the natural projection.

The properties of the CSM/SSM classes of $G/P$ are given by
\begin{lem}\label{lem:BP}
\begin{enumerate}
\item 
For any $w\in W$, 
\[p_*(\csmT(X(w)^\circ))=\csmT(X(wW_P)^\circ)\in H^*_T(G/P).\]
\item 
For any $w\in W^P$,
\[p^*(\ssmT(Y(wW_P)^\circ))=\sum_{x\in W_P}\ssmT(Y(wx)^\circ)\in H^*_T(G/B).\]
\item 
For any $u,w\in W^P$,
\[\langle\csmT(X(wW_P)^\circ),\ssmT(Y(uW_P)^\circ)\rangle=\delta_{w,u}.\]
\end{enumerate}
\end{lem}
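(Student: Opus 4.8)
The plan is to prove (1) directly from the functoriality of Ohmoto's transformation, to deduce (2) from (1) by a Verdier--Riemann--Roch computation along the smooth fibration $p\colon G/B\to G/P$, and then to obtain (3) by combining (1), (2) and the known duality \eqref{equ:duality} on $G/B$.

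For (1) I would argue as follows. Since $p$ is proper and $X(w)^\circ$ is $T$-invariant, $p_*\csmT(X(w)^\circ)=c_*^T(p_*\one_{X(w)^\circ})$, so it suffices to identify the constructible function $p_*\one_{X(w)^\circ}$. One checks $p(X(w)^\circ)=BwP/P=X(wW_P)^\circ$, and, writing $w=w'\sigma$ with $w'\in W^P$, $\sigma\in W_P$ and $\ell(w)=\ell(w')+\ell(\sigma)$, the root-subgroup parametrization identifies $p\colon X(w)^\circ\to X(w'W_P)^\circ$ with the projection $X(w'W_P)^\circ\times\bbA^{\ell(\sigma)}\to X(w'W_P)^\circ$, the second factor being the Schubert cell of $\sigma$ in the flag variety of the Levi of $P$. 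Every nonempty fibre is then an affine space, hence has Euler characteristic $1$, whence $p_*\one_{X(w)^\circ}=\one_{X(wW_P)^\circ}$ and $p_*\csmT(X(w)^\circ)=\csmT(X(wW_P)^\circ)$. Equivalently, one can quote the pushforward computation of \cite{AM16}, which expresses $p_*\csmT(X(w)^\circ)$ through the parabolic Hecke operators.

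For (2), let $T_p$ denote the relative tangent bundle of $p$, so that the relative tangent sequence gives $c^T(T(G/B))=c^T(T_p)\cup p^*c^T(T(G/P))$. The key input is the Verdier--Riemann--Roch identity for equivariant CSM classes along the smooth morphism $p$, namely $c_*^T(p^*\varphi)=c^T(T_p)\cup p^*c_*^T(\varphi)$ for constructible $\varphi$ on $G/P$; this reduces, by normalization, additivity and smooth base change, to the case $\varphi=\one_Z$ with $Z$ smooth and closed, and is standard (cf.\ \cite{O06}). A $G/B$-Schubert cell $Y(v)^\circ$ maps into $Y(wW_P)^\circ$ exactly when $vW_P=wW_P$, so $p^{-1}(Y(wW_P)^\circ)=\bigsqcup_{x\in W_P}Y(wx)^\circ$, and additivity of $c_*^T$ gives $\csmT\bigl(p^{-1}(Y(wW_P)^\circ)\bigr)=\sum_{x\in W_P}\csmT(Y(wx)^\circ)$. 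Hence in $H_T^*(G/B)_{\loc}$
\[
p^*\csmT(Y(wW_P)^\circ)=\frac{1}{c^T(T_p)}\sum_{x\in W_P}\csmT(Y(wx)^\circ),
\]
and dividing by $p^*c^T(T(G/P))$ and using the factorization of $c^T(T(G/B))$ yields
\[
p^*\ssmT(Y(wW_P)^\circ)=\frac{1}{c^T(T(G/B))}\sum_{x\in W_P}\csmT(Y(wx)^\circ)=\sum_{x\in W_P}\ssmT(Y(wx)^\circ),
\]
as desired. This is the $G/B\to G/P$ analogue of the argument used for Lemma \ref{lem:relations}.

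Finally (3) follows formally: for $w,u\in W^P$ one computes
\[
\langle\csmT(X(wW_P)^\circ),\ssmT(Y(uW_P)^\circ)\rangle
=\sum_{x\in W_P}\langle\csmT(X(w)^\circ),\ssmT(Y(ux)^\circ)\rangle
=\sum_{x\in W_P}\delta_{w,ux}=\delta_{w,u},
\]
where the first equality uses (1), the projection formula and (2), the second is \eqref{equ:duality}, and the last uses that $u$ is the unique minimal-length representative of $uW_P$. The only non-formal ingredient is the equivariant Verdier--Riemann--Roch identity in (2); if one prefers to avoid it, one can instead take the parabolic CSM--SSM duality (3) as known from \cite{AMSS17} and reverse the last implication, deriving (2) by expanding $p^*\ssmT(Y(wW_P)^\circ)$ in the basis $\{\ssmT(Y(v)^\circ)\}_{v\in W}$ and pairing each coefficient against $\csmT(X(v)^\circ)$ via the projection formula and (1). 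Thus the main obstacle is really just pinning down this pullback compatibility of CSM classes in the equivariant setting; the rest is bookkeeping with cosets and the tangent sequence.
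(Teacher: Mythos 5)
Your proposal is correct and follows essentially the same route as the paper: part (1) is the functoriality of the (equivariant) MacPherson transformation, for which you correctly supply the missing computation $p_*\one_{X(w)^\circ}=\one_{X(wW_P)^\circ}$ via the affine-space fibres of $X(w)^\circ\to X(wW_P)^\circ$; part (2) is exactly the Verdier--Riemann--Roch argument the paper invokes from \cite[Theorem 9.2]{AMSS17}, together with the coset decomposition $p^{-1}(Y(wW_P)^\circ)=\bigsqcup_{x\in W_P}Y(wx)^\circ$. The only divergence is in (3), where the paper simply cites the parabolic duality \cite[Theorem 9.4]{AMSS17}, whereas you rederive it from (1), (2), the projection formula and the Borel-case duality \eqref{equ:duality}; this is a valid and self-contained alternative, and your closing remark that (2) and (3) can be traded against each other is accurate.
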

\begin{proof}
The first one follows from the functoriality of the MacPherson transformation. The second one follows from the Verdier--Riemann--Roch Theorem (see \cite[Theorem 9.2, or Equation (37)]{AMSS17}). The last one is \cite[Theorem 9.4]{AMSS17}.
\end{proof}

Define the structure constants for the SSM classes as follow:
\[\ssmT(Y(uW_P)^\circ)\cup\ssmT(Y(vW_P)^\circ)=\sum_{w\in W^P}c_{u,v}^w(P)\ssmT(Y(wW_P)^\circ)\in H_T^*(G/P)_{\loc},\]
where $u,v\in W^P$ and $c_{u,v}^w(P)\in \Frac H_T^*(\pt)$. Applying $p^*$ to the above equation and using Lemma \ref{lem:BP}, we get
\[(\sum_{x\in W_P} \ssmT(Y(ux)^\circ))\cup (\sum_{y\in W_P} \ssmT(Y(vy)^\circ))=\sum_{w\in W^P}c_{u,v}^w(P)\sum_{x\in W_P} \ssmT(Y(wx)^\circ).\]
For any $w\in W^P$, taking the coefficient of $\ssmT(Y(w)^\circ)$ on both sides, we obtain
\begin{thm}\label{thm:Pcase}
For any $u,v,w\in W^P$,
\[c_{u,v}^w(P)=\sum_{x\in W_P,y\in W_P}c_{ux,vy}^w.\]
\end{thm}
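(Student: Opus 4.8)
The plan is to push the identity defining $c_{u,v}^w(P)$ through the pullback $p^*$ along $p\colon G/B\to G/P$, expand everything in the basis $\{\ssmT(Y(z)^\circ)\mid z\in W\}$ of $H_T^*(G/B)_{\loc}$, and match coefficients. Since $p^*$ is a homomorphism of $\Frac H_T^*(\pt)$-algebras (extend it to the localizations in the evident way), applying it to
\[\ssmT(Y(uW_P)^\circ)\cup\ssmT(Y(vW_P)^\circ)=\sum_{w\in W^P}c_{u,v}^w(P)\,\ssmT(Y(wW_P)^\circ)\]
and using Lemma \ref{lem:BP}(2) on each factor yields
\[\Bigl(\sum_{x\in W_P}\ssmT(Y(ux)^\circ)\Bigr)\cup\Bigl(\sum_{y\in W_P}\ssmT(Y(vy)^\circ)\Bigr)=\sum_{w\in W^P}c_{u,v}^w(P)\sum_{x\in W_P}\ssmT(Y(wx)^\circ)\]
as an identity in $H_T^*(G/B)_{\loc}$.

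Next I would expand both sides in the basis $\{\ssmT(Y(z)^\circ)\mid z\in W\}$. On the left, bilinearity of the cup product and the definition of the complete-flag structure constants from Theorem \ref{thm:structure1} give $\sum_{x,y\in W_P}\ssmT(Y(ux)^\circ)\cup\ssmT(Y(vy)^\circ)=\sum_{x,y\in W_P}\sum_{z\in W}c_{ux,vy}^{z}\,\ssmT(Y(z)^\circ)$, where $ux$ runs over the coset $uW_P$ and $vy$ over $vW_P$, so every $c_{ux,vy}^z$ appearing is a genuine structure constant on $G/B$. On the right, the map $W^P\times W_P\to W$, $(w,x)\mapsto wx$, is a bijection, so the terms $\ssmT(Y(wx)^\circ)$ run over $\{\ssmT(Y(z)^\circ)\mid z\in W\}$ without repetition and the right-hand side is $\sum_{w\in W^P}\sum_{x\in W_P}c_{u,v}^w(P)\,\ssmT(Y(wx)^\circ)$.

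Finally, I would compare the coefficients of $\ssmT(Y(w)^\circ)$ for a fixed $w\in W^P\subset W$; this is legitimate because $\{\ssmT(Y(z)^\circ)\mid z\in W\}$ is a basis of $H_T^*(G/B)_{\loc}$. The left-hand coefficient is $\sum_{x,y\in W_P}c_{ux,vy}^{w}$. On the right, the only summand contributing to $\ssmT(Y(w)^\circ)$ is the one with $(w,x)=(w,e)$, because $w=w\cdot e$ is the unique factorization of $w$ as a minimal-length representative times an element of $W_P$; that summand contributes $c_{u,v}^w(P)$. Equating the two gives $c_{u,v}^w(P)=\sum_{x,y\in W_P}c_{ux,vy}^{w}$, which is the assertion.

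There is no serious obstacle: the argument rests entirely on Lemma \ref{lem:BP}(2) (a Verdier--Riemann--Roch computation carried out earlier), on the basis property of the SSM classes on $G/B$ and on $G/P$ (which makes all structure constants well-defined, the $G/P$ case holding by the same triangularity reasoning as the $G/B$ case), and on the bijection $W^P\times W_P\to W$. The one thing to watch is bookkeeping — reading $(ux,vy)$ as a pair of Weyl-group elements so that $c_{ux,vy}^{w}$ indeed refers to Theorem \ref{thm:structure1}, and verifying that no basis element is double-counted on the right-hand side.
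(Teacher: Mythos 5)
Your proposal is correct and follows essentially the same route as the paper: apply $p^*$ to the defining identity on $G/P$, invoke Lemma \ref{lem:BP}(2) on each factor, expand in the basis $\{\ssmT(Y(z)^\circ)\mid z\in W\}$, and extract the coefficient of $\ssmT(Y(w)^\circ)$ for $w\in W^P$. The extra bookkeeping you supply (the bijection $W^P\times W_P\to W$ and the uniqueness of the factorization $w=w\cdot e$) is exactly what the paper leaves implicit.
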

\begin{rem}
It was communicated by A. Knutson \cite{KZJ} to the author that the structure constants for the SSM classes of the Schubert cells in a partial flag variety can be used to deduce some puzzle formulae for the Schubert structure constants in 2/3/4-step partial flag varieties in type A, generalizing the earlier works \cite{KT03,Bu02,BKPT16}. In fact, the authors in \cite{KZJ} consider the quotient of the stable basis elements \cite{MO19} by the class of the zero section in the equivariant cohomology of the cotangent bundle of the flag variety, which has a natural $\bbC^*$ action by scaling the cotangent fibers. The pullback of the stable basis elements to the flag variety are the CSM classes of the Schubert cells, see Equation \eqref{equ:stabecsm}. Besides, the pullback of the $\bbC^*$-equvivariant class of the zero section is the total Chern class of the flag variety (after setting the $\bbC^*$-equvivariant parameter to 1 and up to a sign). Thus, the classes considered in \cite{KZJ} are precisely the SSM classes of the Schubert cells. This is one of my motivations for this work.
\end{rem}

\bibliographystyle{halpha}

\begin{thebibliography}{BK}
\bibitem{AM09} P. Aluffi and L. Mihalcea, {\em Chern classes of Schubert cells and varieties}, J. Algebraic Geom., 18 (2009), 63–100

\bibitem{AM16} P. Aluffi and L. Mihalcea {\em Chern--Schwartz--MacPherson classes for Schubert cells in flag manifolds}, Compos. Math., 152(12), 2603--2625, 2016.

\bibitem{AMSS17} P. Aluffi, L. Mihalcea, J. Sch\"urmann and C. Su, {\em Shadows of characteristic cycles, Verma modules, and positivity of Chern-Schwartz-MacPherson classes of Schubert cells}, arXiv preprint arXiv:1709.08697.

\bibitem{AMSS19a} P. Aluffi, L. Mihalcea, J. Sch\"urmann and C. Su, {\em Positivity of Segre-MacPherson classes}, arXiv preprint arXiv:1902.00762.

\bibitem{AMSS19b} P. Aluffi, L. Mihalcea, J. Sch\"urmann and C. Su, {\em Motivic Chern classes of Schubert cells with applications to Casselman's problem}, arXiv preprint arXiv:1902.10101.

\bibitem{AGM11} D. Anderson, S. Griffeth, and E. Miller,{\em Positivity and Kleiman transversality in equivariant K-theory of homogeneous spaces} J. Eur. Math. Soc. 13, 57–84 (2011)

\bibitem{AJS94} H.H. Andersen, J.C. Jantzen, W. Soergel, {\em Representations of quantum groups at a pth root of unity and of semisimple groups in characteristic p.: independence of p}, Ast´erisque, (220):321, 1994.

\bibitem{BGG73} I. N. Bernstein, I. M. Gel'fand and S. I. Gel'fand, {\em Schubert cells, and the cohomology of the spaces $G/P$}, Uspekhi Mat. Nauk 28 (1973), 3–26.

\bibitem{B99} S. Billey, {\em Kostant polynomials and the cohomology ring for G/B}, Duke Math. J. Volume 96, Number 1 (1999), 205--224.

\bibitem{BSY} J. Brasselet, J. Sch\"urmann and S. Yokura, {\em Hirzebruch classes and motivic Chern classes for singular spaces}, Journal of Topology and Analysis, Vol 2, No. 01, 1--55, 2010

\bibitem{B02} M. Brion, {\em Positivity in the Grothendieck group of complex flag varieties}, J. Algebra 258 (2002), 137--159.

\bibitem{Bu02} A. Buch, {\em A Littlewood-Richardson rule for the K-theory of Grassmannians}, Acta Math. 189 (2002), no. 1, 37--78.

\bibitem{BKPT16} A. Buch, A. Kresch, K. Purbhoo, and H. Tamvakis, {\em The puzzle conjecture for the cohomology of two-step flag manifolds}, Journal of Algebraic Combinatorics, Volume 44, Issue 4, 973–-1007, 2016

\bibitem{C17} V. Collins, {\em A puzzle formula for $H^*_{T\times \bbC^*}(T^*\bbP^n)$}, S{\'e}minaire Lotharingien de Combinatoire 78B (2017) 

\bibitem{CG10} N. Chriss and V. Ginzburg, {\em Representation theory and complex geometry}, Birkh\"auser, Boston, 2010.

\bibitem{FRW18} L. Feher, R. Rim{\'a}nyi, and A. Weber, {\em Motivic Chern classes and K-theoretic stable envelopes}, arXiv preprint arXiv:1802.01503.

\bibitem{GK19}R. Goldin and A. Knutson,{\em Schubert structure operators and $K_T(G/B)$}, arXiv preprint arXiv:1909.05283.

\bibitem{G01} W. Graham, {\em Positivity in equivariant Schubert calculus}, Duke Math. J. Volume 109, Number 3 (2001), 599–614.

\bibitem{H16} J. Huh, {\em Positivity of Chern classes of Schubert cells and varieties}, J. Algebraic Geom. 25 (2016), 177–199.

\bibitem{KT03} A. Knutson and T. Tao, {\em Puzzles and (equivariant) cohomology of Grassmannians}, Duke Math. J., 119 (2003), no. 2, 221–260.

\bibitem{KZJ17} A. Knutson and P. Zinn-Justin, {\em Schubert puzzles and integrability I: invariant trilinear forms}, arXiv preprint arXiv:1706.10019

\bibitem{KZJ} A. Knutson and P. Zinn-Justin, {\em in preparation}

\bibitem{K17} S. Kumar, {\em Positivity in T-Equivariant K-Theory of flag varieties associated to Kac-Moody groups}, J. Eur. Math. Soc. 19, 2469--2519 (2017).

\bibitem{L89} G. Lusztig, {\em Affine Hecke algebras and their graded version}, Journal of the American Mathematical Society, Vol 2, 3, 599--635, 1989.

\bibitem{M74} R. D. MacPherson, {\em Chern classes for singular algebraic varieties}, Ann. of Math. (2), Vol 100, 423--432, 1974.

\bibitem{M07} M. Matthieu, {\em A Chevalley formula in equivariant K-theory}, Journal of Algebra, Vol 308, No. 2, 764--779, 2007

\bibitem{MO19} D. Maulik and A. Okounkov, {\em Quantum groups and quantum cohomology}, Ast{\'e}risque, 408, 2019.

\bibitem{MW} L. Mihalcea and C. Withrow, {\em in preparation}

\bibitem{O06} T. Ohmoto,{\em Equivariant Chern classes of singular algebraic varieties with group actions}, Mathematical proceedings of the cambridge philosophical society, Vol 140, No. 1, 115---134, 2006

\bibitem{O15} A. Okounkov, {\em Enumerative geometry and geometric representation theory}, Algebraic Geometry: Salt Lake City 2015 (Part 1), Vol 97, 419-458, 2018

\bibitem{O17} A. Okounkov, {\em Lectures on K-theoretic computations in enumerative geometry}, In Geometry of Moduli Spaces and Representation Theory, volume 24 of IAS/Park City Mathematics Series, 251--380. 2017.

\bibitem{O18} A. Okounkov, {\em On the crossroads of enumerative geometry and geometric representation theory}, arXiv preprint arXiv:1801.09818.

\bibitem{RTV15}R. Rim{\'a}nyi, V. Tarasov and A. Varchenko, {\em Partial flag varieties, stable envelopes, and weight functions}, Quantum Topology, Vol 6, 2, 333--364, 2015

\bibitem{RV15} R. Rim{\'a}nyi and A. Varchenko, {\em Equivariant Chern-Schwartz-MacPherson classes in partial flag varieties: interpolation and formulae}, arXiv preprint arXiv:1509.09315.

\bibitem{Sch17} J. Sch{\"u}rmann, {\em Chern classes and transversality for singular spaces}, Singularities in Geometry, Topology, Foliations and Dynamics, 207--231, 2017

\bibitem{S65a} Schwartz, Marie-H{\'e}l{\`e}ne, {\em Classes caract{\'e}ristiques d{\'e}finies par une stratification d'une vari{\'e}t{\'e} analytique complexe. {I}}, C. R. Acad. Sci. Paris, Vol 260, 3262--3264, 1965.

\bibitem{S65b} Schwartz, Marie-H{\'e}l{\`e}ne, {\em Classes caract{\'e}ristiques d{\'e}finies par une stratification d'une vari{\'e}t{\'e} analytique complexe. {II}}, C. R. Acad. Sci. Paris, Vol 260, 3535--3537, 1965.

\bibitem{S17} C. Su, {\em Restriction formula for stable basis of the Springer resolution}, Selecta Mathematica 23 (1), 497-518

\bibitem{S} C. Su, {\em Motivic Chern classes and Iwahori invariants of principal series}, To appear in Proceedings of International Congress of Chinese Mathematicians 2019

\bibitem{SZ19} C. Su and C. Zhong, {\em Stable bases of the Springer resolution and representation theory}, arXiv preprint arXiv:1904.06613

\bibitem{SZZ17} C. Su, G. Zhao and C. Zhong, {\em On the K-theory stable bases of the Springer resolution}, arXiv preprint arXiv:1708.08013

\bibitem{V00} M. Varagnolo, {\em Quiver varieties and Yangians}, Letters in Mathematical Physics, Vol 53, No. 4, 273--283, 2000

\end{thebibliography}

\end{document}